\documentclass{article}

\usepackage{PRIMEarxiv}

\usepackage{amsmath,amssymb,amsfonts,amsthm,bbm,mathrsfs,verbatim} 
\usepackage[misc]{ifsym}
\usepackage{bbm}
\usepackage{graphics}                 
\usepackage{graphicx}
\usepackage{subfig}
\usepackage{float}
\usepackage{dsfont}
\usepackage{array}
\usepackage{fancyhdr}       
\usepackage{mathtools}
\usepackage{blkarray}
\usepackage{makecell}
\usepackage{color}                    
\usepackage{hyperref}                 
\usepackage{makeidx}
\usepackage{caption}
\usepackage{bbm}
\usepackage{algorithm}
\usepackage{algpseudocode}
\usepackage{authblk}
\usepackage{amsmath} 
\usepackage{booktabs}

\DeclarePairedDelimiter\norm{\lVert}{\rVert}

\DeclareUnicodeCharacter{02B9}{'}

\newtheorem{theorem}{Theorem}[section]
\newtheorem{proposition}[theorem]{Proposition}

\newtheorem{lemma}[theorem]{Lemma}
\theoremstyle{definition}
\newtheorem{remark}[theorem]{Remark}
\newtheorem{definition}[theorem]{Definition}
\newtheorem{example}[theorem]{Example}

\newtheorem{notation}[theorem]{Notation}

\usepackage{neuralnetwork}

\usepackage{tikz}
\usepackage{listofitems} 
\usetikzlibrary{arrows.meta} 
\usepackage[outline]{contour} 

\usetikzlibrary{positioning, calc, arrows.meta}

\tikzset{>=latex} 
\usepackage{xcolor}
\colorlet{myred}{red!80!black}
\colorlet{myblue}{blue!80!black}
\colorlet{mygreen}{green!60!black}
\colorlet{myorange}{orange!70!red!60!black}
\colorlet{mydarkred}{red!30!black}
\colorlet{mydarkblue}{blue!40!black}
\colorlet{mydarkgreen}{green!30!black}
\tikzstyle{node}=[thick,circle,draw=myblue,minimum size=22,inner sep=0.5,outer sep=0.6]
\tikzstyle{node in}=[node,green!20!black,draw=mygreen!30!black,fill=mygreen!25]
\tikzstyle{node hidden}=[node,blue!20!black,draw=myblue!30!black,fill=myblue!20]
\tikzstyle{node convol}=[node,orange!20!black,draw=myorange!30!black,fill=myorange!20]
\tikzstyle{node out}=[node,red!20!black,draw=myred!30!black,fill=myred!20]
\tikzstyle{connect}=[thick,mydarkblue] 
\tikzstyle{connect arrow}=[-{Latex[length=4,width=3.5]},thick, mydarkblue,shorten <=0.5,shorten >=1]
\tikzset{ 
	node 1/.style={node in},
	node 2/.style={node hidden},
	node 3/.style={node out},
}

\newcommand\grad[1]{\nabla \, #1}

\setlength{\affilsep}{2em}

\textheight 215mm \textwidth 170mm
\oddsidemargin -1pt \evensidemargin -1pt
\marginparwidth 10pt \topmargin -22pt

\fancyhead[C]{{\it Fredholm Neural Networks}}
\fancyhead[R]{}

\begin{document} 
\title{Fredholm Neural Networks }

\author{\textbf{Kyriakos Georgiou}\thanks{Dipartimento di Ingegneria Elettrica e delle Tecnologie dell'Informazione, Università degli Studi di Napoli ``Federico II'', Naples, Italy. kyriakoskristofer.georgiou@unina.it}, \,\,\,\,   
\textbf{Constantinos Siettos}\thanks{Dipartimento di Matematica e Applicazioni ``Renato Caccioppoli'', Università degli Studi di Napoli ``Federico II'', Naples 80126, Italy. constantinos.siettos@unina.it (corresponding author)}, \,\,\, \, 
\textbf{Athanasios N. Yannacopoulos}\thanks{Department of Statistics and Stochastic Modelling and Applications Laboratory, Athens University of Economics and Business, Athens, Greece. ayannaco@aueb.gr (corresponding author)}
}

\date{}
\maketitle

\begin{abstract}
Within the family of explainable machine-learning, we present Fredholm neural networks (Fredholm NNs): deep neural networks (DNNs) architectures motivated by fixed- point iteration schemes for the solution of linear and nonlinear Fredholm integral equations (FIEs) of the second kind. We also show how the proposed framework can be also used for the solution of inverse problems. Applications of FIEs include the solution of ordinary, as well as partial differential equations (ODEs, PDEs) and many more. We first prove that Fredholm NNs provide accurate solutions. We then provide insight into the values of the hyperparameters and trainable/explainable weights and biases of the DNN, by directly connecting their values to the underlying mathematical theory. For our illustrations, we use Fredholm NNs to solve both linear and nonlinear problems, including elliptic PDEs and boundary value problems. We show that the proposed scheme achieves significant numerical approximation accuracy across both the domain and boundary. The proposed methodology provides insight into the connection between neural networks and classical numerical methods, and we posit that it can have applications in fields such as Uncertainty Quantification (UQ) and explainable artificial intelligence (XAI). Thus, we believe that it will trigger further advances in the intersection between scientific machine learning and numerical analysis.

\end{abstract}	
{\bf Keywords}: Explainable machine learning, Fredholm neural networks, Numerical analysis, Deep neural networks, Recurrent neural networks, Fixed point iterations\\

\section{Introduction}
The bridging of machine learning and numerical analysis/scientific computing, for the solution of the forward and inverse problem in dynamical systems in the form of ODEs and/or PDEs, can be traced back to the '90s. An intrinsic, such effort, can be found in \cite{rico1992discrete}, where the authors constructed a feedforward artificial neural network (ANN) that implements the fourth order-Runge Kutta method. The proposed Runge-Kutta NN is capable  of both fitting the right-hand side of a continuous dynamical system in the form of ODEs and solving the identified ODEs in time. In \cite{gonzalez1998identification}, one can find the ``ancestor'' of convolutional neural networks (CNNs) for the identification of partial differential equations using five-points stencils of finite differences. In \cite{meade1994numerical} and  \cite{lagaris1998artificial} one can trace the ``ancestors'' of physics-informed neural networks \cite{raissi2019physics}. In the first of these works \cite{meade1994numerical} a neural network with constrained weights and biases is proposed for the solution of linear ODEs, while in the second one \cite{lagaris1998artificial} feedforward neural networks are used to solve nonlinear problems of PDEs. Finally, in \cite{siettos2002semiglobal,siettos2002truncated,alexandridis2002modelling} one can find a forebear of explainable/interpretable artificial intelligence, where fuzzy systems, neural networks and Chebyshev series are used for the construction of normal forms of dynamical systems.\par
More recently, due to both theoretical and technological advancements, the field of scientific machine learning has gained immense traction and interest from researchers and practitioners for the solution of both the forward and inverse problems in dynamical systems and beyond (e.g., for optimization and design of controllers). Specifically, the introduction of PINNs\cite{raissi2019physics} and neural operator networks such as DeepONet \cite{lu2021learning}, Fourier neural networks \cite{li2020Fourier}, graph-based neural operators \cite{li2020multipole,kovachki2023neural}, random feature models \cite{nelsen2021random}, and more recently, random operator (shallow) networks \cite{fabiani2024randonet} provide an arsenal of such scientific machine learning methods (for a review see, e.g., \cite{karniadakis2021physics}). 
In addition, other methods have also been developed, such as Legendre neural networks (\cite{mall2016application}), and simulation-based techniques (\cite{frey2022deep}). These approaches have been applied to problems across many domains, where quantities of interest can be represented as solutions to Ordinary/Algebraic differential equations (ODEs, DAEs) \cite{lu2021deepxde,ji2021stiff,kim2021stiff,de2022physics,fabiani2023parsimonious} and Partial Differential Equations (PDEs) \cite{han2018solving,raissi2019physics,pang2019neural,lee2020coarse,lu2021deepxde,chen2021solving,fabiani2021numerical,calabro2021extreme,dong2021local,dong2021modified,galaris2022numerical}, as well as to Partial Integro-differential Equations (PIDEs) \cite{yuan2022pinn, georgiou2024deep,fabiani2024task}. 
Similarly, seminal work has been done when considering Integral Equations (IEs) such as the Fredholm and Voltera IEs  with applications in various domains (see e.g., \cite{mikhlin2014integral}). Furthermore, IEs can be connected to ODEs and PDEs. In particular, the connection to PDEs is established via the Boundary Integral Equation (BIE) method (see e.g., \cite{kellogg2012foundations,hsiao2008boundary}). This connection has been considered in the context of neural networks in \cite{lin2021binet}, where the authors developed the BINet. This method considers a DNN which is trained to learn the boundary function using either the single or double layer potential representation. Subsequently, the function is integrated to obtain the solution of the PDE. Indeed, as the authors describe, the use of the BIE is very useful since the dimension of the problem becomes smaller (having to only estimate the auxiliary function along the boundary). In  \cite{effati2012neural, guan2022solving, jafarian2013utilizing, qu2023boundary}, the authors use loss functions based on the residual between the model prediction and integral approximations to estimate the solution to FIEs using the standard learning approach for DNNs. Recently, in \cite{solodskikh2023integral}, the authors presented Integral Neural Networks, which were applied to computer vision problems, and were based on the observation that a fully connected neural network layer essentially performs a numerical integration. \par
Naturally, of vital importance is the error analysis of the neural network models. In particular, when considering ODEs and PDEs with boundary conditions, it has been reported that such models can exhibit large errors near the boundaries (see \cite{georgiou2024deep} when considering Kolmogorov backward equations in credit risk) and, more generally, in problems with more complex dynamics, as displayed in \cite{krishnapriyan2021characterizing}. For a rigorous analysis of PINN errors we refer the reader to e.g., \cite{de2022error, doumeche2023convergence}. However, despite recent works that focus on the mathematical analysis of the performance of neural network models, model architecture selection and hyperparameter tuning still relies mainly on ad hoc/heuristic approaches (see for example \cite{yu2020hyper,dong2021modified,galaris2022numerical,fabiani2023parsimonious,bolager2024sampling}). This, along with the fact that training is always done using standard loss function optimization (which may even fail in some cases, as shown in \cite{wang2022and}), leads to weights and biases that have practically no interpretability.
This issue is amplified particularly in the case of DNNs, where the complexity of the inner calculations creates the ``black-box'' effect, meaning that researchers and practitioners are not able to quantify errors and the effect of hyperparameters a-priori, in the same way that stability and monotonicity can be analyzed in numerical schemes such as finite difference and finite elements schemes. Very recently, in  \cite{doncevic2024recursively} based on the concept of Runge-Kutta NNs presented in \cite{rico1992discrete}, the authors propose a recursively recurrent neural network (R2N2) superstructure that can learn numerical fixed point algorithms such as Krylov, Newton-
Krylov and RK algorithms. \par  
Here, motivated by the above challenges and advancements,  we study the connection between Fredholm integral equations and DNNs focusing on explainable machine learning.  
Specifically, we show that, by considering a discretized version of IEs, we can write the process of successive approximations as a feedforward, fully connected DNN, which resembles the fixed point iterations required reaching the solution of FIEs. The connection between integral discretization and the calculation done within the inner layers of a DNN is known (see e.g., the recent paper \cite{solodskikh2023integral}). However, despite this connection, all literature and applications use a model training step (via an appropriate loss function), which may suffer from the aforementioned lack of interpretability. Here, we focus on this connection from a mathematical standpoint, and study how DNNs can be used to solve IEs, corresponding to ODEs and PDEs by taking advantage of their connection to IEs. This allows us to avoid the standard training step; we construct a DNN, with known activation functions, weights and biases, tailor-fitted to the form of the IE via the fixed point approximation theory. In our DNN, the number of hidden layers, nodes per layer and activation function have a straightforward  interpretation related to the theory of fixed point schemes. We refer to the proposed neural networks as Fredholm neural networks (Fredholm NNs), and  show that they can be used easily and efficiently to solve FIEs in various settings.\par 
Under this framework, Fredholm NNs bridge the gap between rigorous numerical schemes and ``black box'' surrogate DNN models. We note that, in this paper, we focus on  the presentation of the methodology and provide the theoretical background for the construction of the Fredholm NNs and provide examples for the forward and inverse problems. The relevant codes are available at \href{https://github.com/kygeor/Fredholm_Neural_Networks/tree/main}{GitHub: kygeor/Fredholm Neural Networks}. Our approach contributes to the growing field of explainable/interpretable artificial intelligence (XAI) and interpretable modelling, and also creates a path for further research such as the analysis and construction of other DNN architectures (e.g., Recurrent neural networks) which can be used for non-linear IEs, as well as analyzing the errors in DNN models and developing alternative constructions and training techniques.

\section{Methodology}
As mentioned, our aim is to explore the connection between Fredholm integral equations (FIEs) and DNNs. As such, some of the theory below can also be found in the aformentioned literature pertaining to connections between IEs and DNNs. However, for the completeness of the presentation, we provide some preliminaries regarding the solutions for FIEs.

\subsection{Solving Fredholm Integral Equations}
We will consider both linear and non-linear 
integral equations. 
The aim of this section is to outline important results regarding such problems. These constitute the basis for the proposed explainable DNNs  for the approximation of their solutions. 
\begin{notation}
\par Throughout this paper, 
we consider the Banach space $\mathcal{X}:= \mathcal{C}\big(D, \mathbb{R} \big)$, endowed with the norm $\norm{f(x)} = \sup_{x \in \mathcal{D}} |f(x)|$, with $x \in \mathbb{R}^d$ and $\mathcal{D} \subset \mathbb{R}^d$ a compact domain, and the Hilbert space ${\mathcal H}:=L^{2}(D ; R)$ endowed with the inner product $\langle f, g \rangle =\int_{D} f(x) g(x) dx$. We remark, however, that the results below are generalizable to other spaces, e.g., $\mathcal{C}(\mathcal{D}, \mathbb{R}^n)$ or Lebesgue spaces.
\end{notation}
    
 With the above notation at hand, let $K:\mathcal{D} \times \mathcal{D} \rightarrow \mathbb{R}$ be a kernel function and $g: \mathcal{D} \rightarrow \mathbb{R}$. We will be studying linear FIEs of the second kind, which are of the form:
\begin{eqnarray}\label{ie}
	f(x) = g(x) + \int_{\mathcal{D}}K(x,z) f(z)dz,
\end{eqnarray}
as well as the non-linear counterpart,
\begin{eqnarray}\label{nl-ie-def}
    f(x) = g(x) + \int_{\mathcal{D}}K(x,z) G(f(z))dz,
\end{eqnarray}
for some function $G: \mathbb{R} \rightarrow\mathbb{R}$ considered to be a Lipschitz function, particularly focusing on cases where the integral operator is either contractive or non-expansive. We remind the reader of these properties in the definition below. 
\begin{definition}
    Consider an operator $\mathcal{T}: \mathcal{X} \rightarrow \mathcal{X}$. We say that $\mathcal{T}$ is a $q-$contraction if:
    \begin{eqnarray}
        \norm{\mathcal{T}f_1 - \mathcal{T}f_2} \leq q \norm{f_1 - f_2},
    \end{eqnarray}
    for some $q < 1$. If $q=1$, then we say the operator is non-expansive.
\end{definition}

\par Throughout this paper, we will use the operator $\mathcal{T}$, also denoted as ${\cal T}^{K,g}$, defined by:
    \begin{eqnarray}\label{OPERATOR}
        (\mathcal{T}f)(x)  = g(x) +  \int_{\mathcal{D}}K(x,z)f(z)dz.
    \end{eqnarray}
For brevity, we will also define the integral operator $\mathcal{I}$: 
\begin{eqnarray}
	(\mathcal{I}f)(x) = \int_{\mathcal{D}}K(x,z)f(z)dz.
\end{eqnarray}
When $\mathcal{T}$ is a contraction, it is well known that the FIE can be solved using the Neumann series $f(x) = \sum_{n=0}^{\infty} (\mathcal{I}^{(n)}g)(x)$, whose convergence is guaranteed by the Banach Fixed-Point Theorem. Simultaneously, such equations can also be solved numerically utilizing appropriate discretization schemes, such as the Nyström method. In this work, we will focus on the fixed point approach, and illustrate its connection to Neural Networks. At this point, we remind the reader of the Krasnoselskii-Mann method (see e.g., \cite{bauschke2017correction} and \cite{kravvaritis2020variational} for further details regarding fixed point methods). In the particular case, when $q=1$, we will extend our approach to the Hilbert space ${\cal H}$ (note that for $\mathcal{D}$ compact ${\cal X} \subset {\cal H}$). Moreover, there are versions of Krasnoselskii-Mann (KM) theorem that are valid in the Banach space, but this is not required here \cite{reich1979weak,zhang2021generalized}.
We will start by considering the linear case.\\

\begin{proposition}[Krasnoselskii-Mann (KM) method]\label{km-method}  Consider the 
 linear FIE (\ref{ie}) defined by a non-expansive operator $\mathcal{T}$, on ${\cal H}$. Furthermore, consider a sequence $\{\kappa_n\}, \kappa_n \in (0,1]$ such that $\sum_n \kappa_n(1-\kappa_n) = \infty$. Then the iterative scheme:
\begin{eqnarray}\label{km-it}
    f_{n+1}(x) = f_n(x) + \kappa_n(\mathcal{T}f_n(x) -f_n(x)) = (1-\kappa_n)f_n(x) + \kappa_n \mathcal{T} f_n(x),
\end{eqnarray}
with $f_0(x) = g(x)$, converges to the fixed point solution of the FIE, $f^{*}(x)$.
\end{proposition}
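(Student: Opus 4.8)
The plan is to run the classical Krasnoselskii--Mann convergence argument for the averaged iteration $f_{n+1} = (1-\kappa_n) f_n + \kappa_n \mathcal{T} f_n$, specialized to the affine non-expansive operator $\mathcal{T} = \mathcal{I} + g$ on the Hilbert space $\mathcal{H}$. First I would fix a solution $f^*$, i.e.\ a fixed point of $\mathcal{T}$, whose existence is the standing solvability hypothesis in the non-expansive case (and is automatic, together with uniqueness, when $q<1$ via the Banach fixed point theorem). The first step is to establish Fej\'er monotonicity of $(f_n)$ with respect to $f^*$: applying the Hilbert-space identity $\norm{(1-\kappa)a+\kappa b}^2 = (1-\kappa)\norm{a}^2 + \kappa\norm{b}^2 - \kappa(1-\kappa)\norm{a-b}^2$ with $a = f_n - f^*$ and $b = \mathcal{T} f_n - \mathcal{T} f^*$, and using $\norm{\mathcal{T} f_n - \mathcal{T} f^*} \le \norm{f_n - f^*}$, I obtain
\begin{eqnarray}\label{eq:km-fejer}
\norm{f_{n+1} - f^*}^2 \le \norm{f_n - f^*}^2 - \kappa_n(1-\kappa_n)\,\norm{\mathcal{T} f_n - f_n}^2 .
\end{eqnarray}
In particular $\{\norm{f_n - f^*}\}$ is nonincreasing, hence convergent, and $(f_n)$ is bounded.

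The second step is to deduce asymptotic regularity. Telescoping \eqref{eq:km-fejer} gives $\sum_n \kappa_n(1-\kappa_n)\,\norm{\mathcal{T} f_n - f_n}^2 < \infty$, and since the hypothesis $\sum_n \kappa_n(1-\kappa_n) = \infty$ precludes a uniformly positive lower bound on the residuals, $\liminf_n \norm{\mathcal{T} f_n - f_n} = 0$. To upgrade this to a genuine limit, I would show the residual norm itself is nonincreasing: writing $r_n = \mathcal{T} f_n - f_n$ and using $f_{n+1} - f_n = \kappa_n r_n$ one gets $r_{n+1} = (\mathcal{T} f_{n+1} - \mathcal{T} f_n) + (1-\kappa_n) r_n$, whence $\norm{r_{n+1}} \le \kappa_n \norm{r_n} + (1-\kappa_n)\norm{r_n} = \norm{r_n}$; a nonnegative nonincreasing sequence with zero $\liminf$ converges to $0$, so $\norm{\mathcal{T} f_n - f_n} \to 0$.

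The third step is to pass from asymptotic regularity to convergence. Being bounded, $(f_n)$ has a weakly convergent subsequence $f_{n_k} \wc \bar f$; by the demiclosedness principle for non-expansive maps on a Hilbert space (see e.g.\ \cite{bauschke2017correction}), $(I-\mathcal{T}) f_{n_k} \to 0$ forces $(I-\mathcal{T})\bar f = 0$, so $\bar f \in \mathrm{Fix}(\mathcal{T})$. Uniqueness of the weak cluster point then follows from an Opial-type argument: if $f_{n_j} \wc \tilde f$ is another weak limit, it is likewise a fixed point, and since both $\norm{f_n - \bar f}$ and $\norm{f_n - \tilde f}$ converge by the first step, expanding $\norm{f_n - \bar f}^2 - \norm{f_n - \tilde f}^2$ and letting $n\to\infty$ along the two subsequences yields $\bar f = \tilde f$. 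Hence the whole sequence satisfies $f_n \wc f^*$ with $f^* \in \mathrm{Fix}(\mathcal{T})$, i.e.\ $f^*$ solves the FIE \eqref{ie}.

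I expect the third step — the demiclosedness principle and the Opial argument pinning down a single weak limit — to be the only delicate point; steps one and two are routine Hilbert-space estimates once the right identity is invoked. I would close with two remarks: when $q<1$ the fixed point is unique and \eqref{eq:km-fejer} already yields norm convergence $\norm{f_n - f^*}\to 0$, so the conclusion strengthens; and the specific initialization $f_0 = g$ plays no role, any $f_0 \in \mathcal{H}$ producing the same limit.
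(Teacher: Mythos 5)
Your argument is correct and complete in outline: Fej\'er monotonicity via the Hilbert-space convexity identity, summability of the residuals combined with monotonicity of $\norm{\mathcal{T}f_n - f_n}$ to get asymptotic regularity, and demiclosedness plus Opial's lemma to pin down the weak limit --- this is precisely the classical Groetsch/Krasnoselskii--Mann proof that the paper itself does not reproduce but defers to its citation of \cite{kravvaritis2020variational}. The one point worth making explicit is that what you (correctly) obtain is \emph{weak} convergence to a fixed point, which is all that holds in general for non-expansive maps on an infinite-dimensional Hilbert space; the proposition's unqualified ``converges'' must be read in that sense, with norm convergence recovered only in the contractive case $q<1$, as you note at the end.
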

For a proof of the above see e.g., in \cite{kravvaritis2020variational}.\par
It is straightforward to see that, when $\mathcal{T}$ is a contraction, we can set $\kappa_n =1$, for all $n$, in which we can work on the original Banach space $\cal{X}$ and obtain the iterative process:
\begin{eqnarray}\label{iterations}
	f_n(x)= g(x) + (\mathcal{I} f_{n-1})(x), \,\,\ n \geq 1,
\end{eqnarray}
which converges to the fixed point solution. This is often referred to as the method of successive approximations, which is easily seen to coincide with the Neumann series expansion. Both (\ref{km-it}) and (\ref{iterations}) will be useful for the methods we will apply in the following sections.\par 
This category of equations will allow us to develop a straightforward connection with standard deep neural networks, thus forming the basis of this novel way of constructing and interpreting such models. 

\subsection{Connection between linear FIEs and Deep Neural Networks}
In this section, we present the relationship between linear FIEs and DNNs and describe their construction motivated by the fixed point iteration process. In particular, we demonstrate a tailor-made/explainable deep neural networks (DNNs) that can approximate to any given accuracy the solution of FIEs. Importantly, the structure and the weights of the DNN are shaped by the form of the FIEs and the required approximation accuracy.

Here,  the aim is to estimate the solution of FIEs, rather than learning the integral operator. We do this by showing that the parameters of the DNN can be interpreted entirely based on the theory of fixed point methods. In this way, we are able to establish a connection between the mathematical theory of FIEs and DNNs, creating an explainable/interpretable DNN-based numerical estimation framework. \par 
Here, we introduce the following definition. 

\begin{definition}[$M-$layer fixed point estimate] \label{dd}
Consider a function $f$ satisfying the FIE (\ref{ie}). Consider a $z-$grid $Z = \{z_1, \dots z_N \}$, with $|Z| = N$, such that $x \in Z$. Furthermore, consider the following operator representing an iteration of KM-algorithm (rearranged) using the discretized version of the integral operator, $\mathcal{T}$, i.e.,:
\begin{eqnarray}\label{disc-op}
	(\mathcal{T}_Z f) (x) := \kappa_n g(x) + \sum_{j =1}^Nf(z_j)\Big( K(x,z_{j})\kappa_n\Delta z + (1-\kappa_n) \mathbbm{1}_{\{z_j = x\}}\Big).
\end{eqnarray}
Then, we define the $M-$layer fixed point estimate of the solution to the FIE as the following representation arising from applying the iterative algorithm (\ref{km-it}):
\begin{eqnarray}\label{discr_def}
	f_M(x) = \underbrace{\mathcal{T}_{Z}\Big(\mathcal{T}_{Z}\big(\dots (\mathcal{T}_{Z}g)(x)\dots \big)\Big)}_{\text{$M$ times}}.
\end{eqnarray}
\end{definition}

\begin{remark}
The above proposed scheme, is motivated by the simplest way of discretizing the action of the integral operator $\mathcal{I}$, which allows us to approximate the FIE in terms of the Riemann sum:
\begin{eqnarray}\label{disc}
	f(x) \approx g(x) + \sum_{j=1}^{N}f(x_j)K(x,z_j)\Delta z.
\end{eqnarray}
Such discretization schemes have been used in the context of standard numerical analysis techniques. One of the novelties of the present work is indeed the establishment of the bridge between such approaches and DNNs.
\end{remark}
Furthermore, it is important to discuss the $z-$grid  used for the discretization.  In particular, we note that the $z-$grid  corresponds to the nodes of a DNN layer, providing interesting intuition behind the construction of the DNN and its connection to the discretization (\ref{discr_def}). 
In \cite{solodskikh2023integral}, the authors also represent integral equations as DNNs; however, they learn the weights of the integration using backpropagation techniques in contrast to our approach, which assigns the weights a-priori 
based on the form of the FIE, thus enhancing the explainability of the DNN.\par 
The following result details the construction of the proposed DNN model that we call Fredholm neural network (Fredholm NN). Note that, for simplicity in the calculations, we will assume that $x \in Z$, i.e., there exists $i\in {1,2,\dots, N}$ such that $x = z_i$ (this is further discussed in the results below).

\begin{figure}\label{fnn-versions}
\centering
\subfloat[]{\begin{neuralnetwork}[height=1., layertitleheight=3.cm, nodespacing=1.5cm, layerspacing=2.cm]
        \newcommand{\x}[2]{$\kappa g(z_#2)$}
        \newcommand{\z}[2]{$x$}
        \newcommand{\y}[2]{$f_3(x)$}
        \newcommand{\hfirst}[2]{\small $f_1(z_#2)$}
        \newcommand{\hsecond}[2]{\small $f_2(z_#2)$}
        \inputlayer[count=1, bias = false, text = \z]
        \hiddenlayer[count=4, bias=false,  text=\x] \linklayers
        \hiddenlayer[count=4, bias=false,  text=\hfirst] \linklayers
        \hiddenlayer[count=4, bias=false,  text=\hsecond] \linklayers
        \outputlayer[count=1,  text=\y] \linklayers
    \end{neuralnetwork}}

    \hfill \\
\vspace{1cm}
\subfloat[]{ \begin{neuralnetwork}[height=1.5, layertitleheight=3.6cm, nodespacing=2.0cm, layerspacing=2.8cm]
        \newcommand{\x}[2]{$\kappa g(z_#2)$}
        \newcommand{\z}[2]{$x_#2$}
        \newcommand{\y}[2]{$f_3(x_#2)$}
        \newcommand{\hfirst}[2]{\small $f_1(z_#2)$}
        \newcommand{\hsecond}[2]{\small $f_2(z_#2)$}
        \inputlayer[count=4, bias = false, text = \z]
        \hiddenlayer[count=4, bias=false,  text=\x] \linklayers
        \hiddenlayer[count=4, bias=false,  text=\hfirst] \linklayers
        \hiddenlayer[count=4, bias=false,  text=\hsecond] \linklayers
        \outputlayer[count=4,  text=\y] \linklayers
    \end{neuralnetwork}}

    \caption{Schematic of the proposed Fredholm NN as a fixed-point DNN with $M=3$ hidden layers; each node corresponds to a value on a discretized grid (we consider a constant value of the parameter $\kappa$): (a) One-dimensional output, (b) Multi-dimensional output across a grid.}
    \label{fig:DNN-grid}
\end{figure}

\begin{lemma}
[Fredholm Neural Network]\label{DNN_construction}
Consider a KM constant $\kappa = \kappa_n$ for all $n$. The $M$-layer FIE approximation $f_M(x)$ can be implemented as a Deep Neural Network with a one-dimensional input $x$, $M$ hidden layers, a linear activation function and a single output node corresponding the estimated solution $f(x)$, where the weights and biases are given by:
\begin{eqnarray}
    W_1 = \left(\begin{array}{ccc}
		\kappa g(z_1), \dots, \kappa g(z_{N})
	\end{array}\right)^{\top}, \,\,\,\,\    b_1 = \left(\begin{array}{ccc}
		0, 0, \dots, 0
	\end{array}\right)^{\top}
 \end{eqnarray} 
for the first hidden layer,  
\begin{eqnarray}	\label{inner-weight}
W_m=
\left(\begin{array}{cccc}
	K_D\left(z_1\right) & {K}\left(z_1, z_2\right)\kappa\Delta z & \cdots & {K}\left(z_1, z_{N}\right)\kappa\Delta z \\
 {K}\left(z_2, z_1\right)\kappa\Delta z  & K_D\left(z_2\right) & \cdots & {K}\left(z_2, z_{N}\right)\kappa\Delta z \\
	\vdots & \vdots & \ddots & \vdots \\
	\vdots & \vdots & \vdots & \vdots \\
	{K}\left(z_{N}, z_1\right)\kappa\Delta z & {K}\left(z_{N}, z_2\right)\kappa\Delta z & \cdots & K_D\left(z_{N}\right) 
\end{array}\right),
\end{eqnarray}
and
\begin{eqnarray}
	b_m=\left(\begin{array}{ccc}
		\kappa g(z_1), \dots, \kappa g(z_{N})
	\end{array}\right)^{\top},
\end{eqnarray}
for hidden layers $m= 2, \dots, M-1$, where $K_D\left(z\right) := {K}\left(z, z\right)\kappa\Delta z + (1-\kappa)$, and finally:
\begin{eqnarray}\label{K-layer} \label{outer-weight}
	\begin{gathered}
		W_M=\left(\begin{array}{ccc}
			{K}\left(z_1, x\right)\kappa\Delta z, \dots, {K}\left(z_{i-1},x\right)\kappa\Delta z, K_D(x), {K}\left(z_{i+1}, x\right)\kappa\Delta z, \dots,
			{K}\left(z_{N}, x\right)\kappa\Delta z
		\end{array}\right)^{\top},
	\end{gathered}
\end{eqnarray}
and $b_M =\big(\kappa g(x) \big)$, for the final layer, assuming $z_i = x$.
\end{lemma}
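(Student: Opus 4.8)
The plan is to verify that the forward pass of the described network, layer by layer, reproduces exactly the iterated operator $f_M(x)$ from Definition~\ref{dd}. The key observation is that the discretized operator $\mathcal{T}_j$ in \eqref{disc-op} is affine in the vector of nodal values $\big(f(z_1),\dots,f(z_N)\big)^\top$: indeed $(\mathcal{T}_j f)(z_i) = \kappa_n g(z_i) + \sum_{j} f(z_j)\big(K(z_i,z_j)\Delta z + (1-\kappa_n)\mathbbm{1}_{\{z_j=z_i\}}\big)$, and the matrix multiplying the nodal vector is precisely $W_m$ in \eqref{inner-weight} with diagonal entries $K_D(z_i) = K(z_i,z_i)\Delta z + (1-\kappa_m)$, while the additive term $\kappa_n g(z_i)$ is exactly the bias $b_m$. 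So the content of the lemma is that ``one hidden layer with linear activation, weight matrix $W_m$ and bias $b_m$'' $=$ ``one application of $\mathcal{T}_j$ evaluated on the whole $z$-grid.''

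First I would set up notation for the forward pass: with linear activation, the vector entering hidden layer $m+1$ is $a^{(m+1)} = W_{m+1} a^{(m)} + b_{m+1}$ (or $W_m$ with the paper's indexing), and the network output is $W_M^\top a^{(M-1)} + b_M$. Then I would proceed inductively. For the base case, the input node carries $x$, and the first hidden layer has weights $W_1 = (\kappa g(z_1),\dots,\kappa g(z_N))^\top$ and zero bias, so its activation vector is $\big(\kappa g(z_1) x,\dots\big)$ — here one needs the convention (implicit in Figure~\ref{fig:DNN-grid} and the remark that $x\in Z$) that the first layer encodes $f_1(z_j) = \kappa g(z_j)$; more precisely, since $f_0 = g$ and the KM update with $f_0=g$ gives $f_1(z_j) = (1-\kappa)g(z_j) + \kappa\mathcal{T}g(z_j)$, I would check that the intended first-layer output matches $\mathcal{T}_{j_1} g$ evaluated on the grid, which is $\kappa g(z_j) + \sum_i g(z_i)(K(z_j,z_i)\Delta z + (1-\kappa)\mathbbm{1})$; this forces the reading that $W_1$ together with the ``$x=z_i$'' input reproduces the $g$-dependent part and the first genuine integral contraction actually happens at layer two. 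I would state this convention explicitly rather than fight it. For the inductive step, assuming the activation vector after hidden layer $m$ equals $\big(f_{m-1}(z_1),\dots,f_{m-1}(z_N)\big)^\top$ (in the notation of \eqref{discr_def}/Figure~\ref{fig:DNN-grid} where layer $m$ holds $f_{m-1}$), applying $W_{m+1} = W_m$ and $b_{m+1}=b_m$ gives entry $i$ equal to $\kappa g(z_i) + \sum_j f_{m-1}(z_j)\big(K(z_i,z_j)\Delta z + (1-\kappa)\mathbbm{1}_{\{z_j=z_i\}}\big) = (\mathcal{T}_j f_{m-1})(z_i) = f_m(z_i)$, which is exactly \eqref{disc-op}. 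Finally, the output layer: $W_M^\top$ in \eqref{outer-weight} has entries $K(z_j,x)\Delta z$ off the index $i$ with $z_i = x$, and $K_D(x)$ at position $i$, so $W_M^\top a^{(M-1)} + b_M = \kappa g(x) + \sum_j f_{M-1}(z_j)\big(K(x,z_j)\Delta z + (1-\kappa)\mathbbm{1}_{\{z_j=x\}}\big) = f_M(x)$, which is the claimed output.

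The main obstacle is bookkeeping rather than depth: getting the layer-indexing consistent between Definition~\ref{dd}, the figure, and the weight formulas — in particular reconciling ``$M$ hidden layers'' with the fact that $b_1=0$ while $b_m = \kappa g$ for $2\le m\le M-1$, and pinning down exactly which layer first applies the discretized integral kernel versus merely loading $\kappa g(z_j)$. I would handle this by fixing once and for all that hidden layer $m$ stores the vector $(f_{m-1}(z_1),\dots,f_{m-1}(z_N))^\top$ with $f_0 = g$, noting that the transpose conventions on $W_1$ and $W_M$ are just reshaping a $1\times N$ / $N\times 1$ interface, and remarking that the $\kappa_m$ appearing in $K_D$ may be taken constant $=\kappa$ (as the figure caption says) or layer-dependent with no change to the argument. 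A secondary, minor point worth a sentence is the standing assumption $x = z_i \in Z$: if $x\notin Z$ one replaces the output weight's $i$-th slot by $K(x,x)\Delta z$ only if $x$ happens to be a grid point, and otherwise simply uses $K(x,z_j)\Delta z$ for all $j$ with no diagonal correction — I would flag this as the reason the lemma is stated under $x\in Z$. With these conventions stated, the proof is a direct induction and a final one-line computation for the output layer.
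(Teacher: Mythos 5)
Your proposal is correct and follows essentially the same route as the paper's proof: identify each hidden layer (linear activation, weight $W_m$, bias $b_m$) with one application of the discretized KM operator \eqref{disc-op} on the $z$-grid, with the first layer merely loading the initialization $\kappa g(z_j)$ and the output layer performing the final $x$-dependent contraction. The paper verifies this by writing out the $M=2$ case \eqref{discr_def_1} and asserting the generalization, whereas you organize the same computation as an explicit induction and flag the layer-indexing and $x\in Z$ conventions; this is a tidier write-up of the identical argument rather than a different one.
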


\begin{proof}
\par The result follows from the connection between the DNN nodes and the $z-$grid, in combination with the discretized form (\ref{discr_def}). For illustrative purposes, we rewrite this form for $M=2$:  
\begin{flalign}\label{discr_def_1}
f_2(x) \approx \kappa g(x) + \sum_{j_2 =1}^N \Big( \kappa g(z_{j_2}) + \sum_{j_1 =1}^N g(z_{j_1}) \Big( {K}(z_{j_2}, z_{j_1})\kappa \Delta z & + (1-\kappa) \mathbbm{1}_{\{z_{j_1} = z_{j_2}\}}\Big) \Big) \notag \\ & \Big( {K}(x, z_{j_2})\Delta z + ( 1- \kappa) \mathbbm{1}_{\{z_{j_2} = x\}}\Big) .
\end{flalign}
 One can easily verify that (\ref{discr_def_1}) describes the sequence of calculations that occur within the described, fully-connected feedforward DNN. Specifically, given the input values, the first layer corresponds to the initialization of the iterative method across the grid described by the nodes of the layer, i.e., $\kappa g(z_{j})$. Subsequently, we can see that the expression in the inner summation in (\ref{discr_def_1}) describes the calculation done between the hidden layers, with the resulting nodes then corresponding to the approximation obtain by the second iteration of (\ref{km-it}), across the grid. The final output corresponds to the calculation performed within the outer summation, resulting in the final approximation of the solution to the FIE, $f_M(x)$. Therefore, we obtain the weight matrices $W_1, W_2 \in \mathbb{R}^{N \times N}$, as given by (\ref{inner-weight}) and (\ref{outer-weight}), respectively, with the corresponding biases $b_1 \in \mathbb{R}^{N}$ and $b_2 \in \mathbb{R}^{N}$:
\begin{eqnarray}
   b_1 = \left(\begin{array}{ccc}
		\kappa g(z_1), \dots, \kappa g(z_{N})
	\end{array}\right)^{\top}, \,\,\,\  		b_2=\left( \kappa g(x) \right).
\end{eqnarray}
It is straightforward to generalize the above representation to $M-$layers according to (\ref{iterations}), which then provides the architecture of the remaining DNN layers. 
\par Finally, notice how in (\ref{discr_def_1}) the weights and biases from the hidden layer to the output node depend on the input value $x$, obtaining the form given in (\ref{K-layer}), thus completing the construction of the DNN model. 
 \end{proof}
With the result above, we establish the entire architecture of the DNN that estimates the solution of the FIE. We illustrate the constructed DNN (with 2 hidden layers) in Fig. \ref{fig:DNN-grid}. The proposed approach describes a way of constructing explainable numerical analysis-informed DNNs. 
As described above, the depth of the DNN is equivalent to the number of successive iterations performed to reach the fixed point solution of the IE. This provides the selection of a DNN within a required numerical approximation error, by simply examining the corresponding errors in the iterative process (see e.g., \cite{micula2023iterative} for such results). \par 
Based on this construction, we present the following main result. 

\begin{theorem}\label{FUA}
Let ${\cal Z}$ be either ${\cal X}$ or ${\cal H}$, depending on whether the operator ${\cal T}^{K, g} : {\cal Z} \to {\cal Z}$  (defined in \eqref{OPERATOR}) is a contractive or non expansive.
Consider the space $\mathcal{Z'} \subset \mathcal{Z}$ defined by $\mathcal{Z'}: = \{ f \in \mathcal{Z}: (\mathcal{T}^{K,g}f) = f, \text{ for some } K, g \}$. Then, for any $f \in {\cal Z}'$ and for any given approximation error $\epsilon$, there exists a fully connected DNN, $u(x;\mathcal{W},\mathcal{B})$, with $M$ hidden layers, whose weights and biases $\mathcal{W}$ and $\mathcal{B}$ are as given in Lemma \ref{DNN_construction}, and activation function given by:
\begin{eqnarray}
 \sigma(x) =
\begin{cases}
   \kappa g(x), \text{ for  } L = 1, \\
    x, \text { for } L \geq 2,   
\end{cases}
\end{eqnarray}
where $L$ represents the hidden layer index, and $\kappa \in \mathbb{R}$, such that $\norm{f(x) - u(x;\mathcal{W},\mathcal{B})} \leq \epsilon$. 
\end{theorem}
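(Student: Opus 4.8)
The plan is to combine the convergence guarantee from the fixed point theory (Proposition~\ref{km-method} in the non-expansive case, and the contraction mapping principle in the contractive case) with the exact algebraic identification carried out in Lemma~\ref{DNN_construction}. The overall strategy splits the total error $\norm{f(x) - u(x;\mathcal{W},\mathcal{B})}$ into two pieces by a triangle inequality: the \emph{iteration error} $\norm{f(x) - f_M(x)}$, where $f_M$ is the $M$-layer fixed point estimate obtained by running the \emph{exact} (continuous-integral) KM iteration $M$ times starting from $f_0 = g$; and the \emph{discretization error} $\norm{f_M(x) - u(x;\mathcal{W},\mathcal{B})}$, coming from replacing each integral $(\mathcal{I}f)(x) = \int_{\mathcal{D}} K(x,z) f(z)\,dz$ by its Riemann sum over the $z$-grid $Z$. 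The first I would control using the standard rate: in the contractive case $\norm{f - f_M} \le q^M \norm{f - g}/(1-q) \to 0$, and in the non-expansive case on $\mathcal{H}$ the KM theorem with $\sum_n \kappa_n(1-\kappa_n) = \infty$ gives $\norm{f - f_M} \to 0$ (weakly, and by the structure of FIEs of the second kind with compact $\mathcal{I}$, strongly). The second I would control by uniform continuity: $K$ and $g$ are continuous on the compact set $\mathcal{D}$ (resp.\ $\mathcal{D}\times\mathcal{D}$), hence uniformly continuous, so for a sufficiently fine grid (mesh $\Delta z$ small, $N$ large) each quadrature error is below any prescribed tolerance, and since $M$ is \emph{fixed} once the iteration count is chosen, finitely many such errors compose without blowing up (using that $\mathcal{I}$ and its discretization are bounded operators with norm controlled by $\norm{K}_\infty |\mathcal{D}|$).

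Concretely, the steps in order are: (i) fix $\epsilon$; (ii) choose $M$ large enough that the iteration error is $\le \epsilon/2$, invoking Proposition~\ref{km-method} (non-expansive) or Banach's theorem (contractive) — this is legitimate since $f \in \mathcal{Z}'$ means $f$ is exactly the fixed point of some $\mathcal{T}^{K,g}$, so the hypotheses of those results apply; (iii) with $M$ now fixed, choose the grid $Z$ fine enough that the accumulated discretization error over the $M$ compositions is $\le \epsilon/2$ — here I would prove by induction on the layer index $m$ that $\norm{f_m^{\text{cont}} - f_m^{\text{disc}}} \le C_m \cdot (\text{quadrature error per step})$ with $C_m$ depending only on $m$, $\norm{K}_\infty$, $|\mathcal{D}|$, and $\kappa$; (iv) apply Lemma~\ref{DNN_construction} to observe that the discretized iterate $f_M^{\text{disc}}(x) = f_M(x)$ in the lemma's notation is \emph{exactly} the output of the DNN $u(x;\mathcal{W},\mathcal{B})$ with the stated weights, biases, and activation (linear for $L\ge 2$, and the affine-in-$g$ map $\kappa g(\cdot)$ at the first layer encoding the initialization $f_0 = g$); (v) combine via the triangle inequality to conclude $\norm{f(x) - u(x;\mathcal{W},\mathcal{B})} \le \epsilon$. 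Note the assumption $x \in Z$ (i.e.\ $x = z_i$ for some $i$), already flagged before Lemma~\ref{DNN_construction}, is what lets the output layer weight vector $W_M$ in~\eqref{outer-weight} be a genuine row of the iteration matrix; for general $x$ one would add one more interpolation/quadrature error term, but that is outside the stated theorem.

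The main obstacle I expect is step (iii): making the discretization-error bound uniform and showing it does not compound badly across the $M$ layers. Each layer applies the affine map $v \mapsto \kappa g + (W_m) v$ with $\norm{W_m}$ bounded (in the $\infty\to\infty$ operator norm) by roughly $\norm{K}_\infty |\mathcal{D}| + |1-\kappa|$, so the Lipschitz constant of each discrete layer is controlled, and the per-layer quadrature error (difference between $\int K(x,z) f_{m-1}(z)\,dz$ and $\sum_j K(x,z_j) f_{m-1}(z_j)\Delta z$) is $O(\omega_K(\Delta z))$ where $\omega_K$ is a modulus of continuity — but one must also track that the iterates $f_m$ stay in a fixed bounded set (which they do, since the exact iteration converges and hence is bounded, and the discrete iteration is a small perturbation). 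The induction then reads $\norm{f_m^{\text{cont}} - f_m^{\text{disc}}} \le (\text{Lip})\,\norm{f_{m-1}^{\text{cont}} - f_{m-1}^{\text{disc}}} + O(\omega_K(\Delta z))$, a linear recursion whose solution is bounded by $M \cdot (\text{Lip})^{M} \cdot O(\omega_K(\Delta z))$, which $\to 0$ as $\Delta z \to 0$ for fixed $M$. A secondary subtlety worth a remark is the non-expansive case: there $\kappa_n$ genuinely varies with $n$, so the matrices $W_m$ are layer-dependent through $K_D(z) = K(z,z)\Delta z + (1-\kappa_m)$, and one should note the series condition $\sum \kappa_n(1-\kappa_n)=\infty$ must be compatible with the truncation at $M$ steps — i.e.\ $M$ is chosen \emph{after} fixing the sequence $\{\kappa_n\}$, large enough that the KM convergence has brought us within $\epsilon/2$.
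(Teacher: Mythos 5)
Your high-level strategy coincides with the paper's: invoke convergence of the (KM or Banach) fixed-point iteration to get $f_M \to f^*$, then use Lemma~\ref{DNN_construction} to identify the $M$-fold discretized iteration with the forward pass of the constructed network. Where you differ is in the level of care: the paper's own proof simply asserts these two facts in two sentences and never separates the iteration error from the quadrature error, whereas your triangle-inequality split and the layerwise induction $\norm{f_m^{\text{cont}} - f_m^{\text{disc}}} \leq (\text{Lip})\,\norm{f_{m-1}^{\text{cont}} - f_{m-1}^{\text{disc}}} + O(\omega_K(\Delta z))$ is exactly the accounting the paper postpones to the later proposition on the error bound (where the term $\tfrac{D(b-a)^2}{2n}$ plays the role of your per-layer quadrature error). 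Your version is the more complete argument for the approximation claim itself, and your remark about weak versus strong convergence in the non-expansive Hilbert-space case identifies a point the paper passes over silently.

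The one substantive piece you drop is, however, the part the paper's proof spends most of its length on: extending the network from the grid $Z$ to arbitrary evaluation points. The theorem's conclusion $\norm{f(x) - u(x;\mathcal{W},\mathcal{B})} \leq \epsilon$ is in the sup (or $L^2$) norm over all of $\mathcal{D}$, not over the grid, so declaring the off-grid case ``outside the stated theorem'' leaves the claimed inequality unproved as written. The paper closes this by appending one more affine layer, $f(X_O) = W_O f_M(Z) + g(X_O)$ as in \eqref{final_layer} with $W_O$ given by \eqref{we_o}, which applies the discretized operator $\mathcal{T}$ once more to the grid values and thereby evaluates the approximation at any $x_1,\dots,x_{N_O}$; the additional error incurred is again a single quadrature error of the type you already control, so folding this step into your induction (as an $(M{+}1)$-th application of the same recursion, with the Lipschitz bound $\norm{K}_\infty |\mathcal{D}|$ you computed) would make your argument complete and strictly stronger than the paper's.
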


\begin{proof}
Let $f^*$ be the fixed point solution of the Fredholm Integral Equation \ref{ie}. Firstly, we know that $f_M\rightarrow f^*$, where $f_M$ represents the $M$-th iteration of the fixed point algorithm. Furthermore, from Proposition \ref{DNN_construction} we know that each iteration can be written as a connected layer in a Deep Neural Network, with the final output being the solution estimate at each point in a predefined grid. 
\par It now remains to show that this DNN can be used for any vector of arbitrary input values $X_O = \big( x_1, \dots, x_{N_O}\big) \in \mathbb{R}^{N_O}$. By definition, the approximation, $f(X_O) = \big( f(x_1), \dots, f(x_{N_O}) \big)^{\top}$, satisfies (\ref{ie}). Hence, we can use the discretized version of the integral operator to obtain the DNN output by adding a final layer that performs the calculation:
 \begin{eqnarray}\label{final_layer}
     f(X_O) = W_{O} f_M(Z) + g(X_O),
 \end{eqnarray}
 where we adopt the notation $g(X_O) = \big( g(x_1), \dots, g(x_{N_O})\big)$, $W_{O} \in \mathbb{R}^{ N_O \times N}$ is given by:
\begin{eqnarray}\label{we_o}
    W_O= \left(\begin{array}{ccc}
	{K}\left(z_1, x_1\right)\Delta z  & \cdots & {K}\left(z_1, x_{N_O}\right)\Delta z \\
	{K}\left(z_2, x_1\right)\Delta z & \cdots & {K}\left(z_2, x_{N_O}\right)\Delta z \\
	\vdots & & \\
	\vdots & & \\
	{K}\left(z_N, x_{N_O}\right)\Delta z & \cdots & {K}\left(z_N, x_{N_O}\right)\Delta z
\end{array}\right),
\end{eqnarray}
and $f_M(Z)$ is the $N-$dimensional output of the Fredholm NNs, $f_{M}(z_j)$, for $i = \{1,\dots, N \}$. The result of this final layer provides the estimate for the $f(X_O)$, as required. 
\end{proof}
The above is displayed in Fig. \ref{fig:ensemble-NN}. For clarity, the illustration explicitly shows how we first construct the DNN that solves the FIE along the pre-determined grid and subsequently apply the discretized operator (\ref{disc}) that provides the estimated solution for any arbitrary values. Furthermore, it is interesting to note that, since the calculations occurring within the hidden layers of the DNN, as depicted in Fig. \ref{fig:ensemble-NN}, do not depend on the input values, this approach is computationally efficient as well, since all intermediary calculations needed for the approximation across the grid can be done once. The results can then be subsequently used via the $\mathcal{T}$-mapping node to obtain the approximation for any value. This can be particularly useful for cases where many hidden layers or very fine grids are required to approximate the fixed point solution within a given margin of error. 

\begin{remark}
    From a theoretical standpoint, we also remark that ANNs with polynomial activation functions are generally not universal approximators (see e.g., \cite{sonoda2017neural, leshno1993multilayer}) for all $L^2(D)$ continuous functions. Despite this known fact, Theorem \ref{FUA} shows that for particular subspaces, such as $\mathcal{Z}'$, ANNs with linear activation functions needing only the application of $g:\mathbb{R} \rightarrow \mathbb{R}$ as an activation to the $x$- grid created in the first layer can be universal approximators.
\end{remark}

Finally, a special case of the above is when the operator is $q-$contractive, for $q < 1$, simplifying the fixed point iterations to the form (\ref{iterations}). In such cases, we have the following result, which provides a closed form expression for the error in the Fredholm NNs, and conversely, the number of hidden layers required for a given error bound:

\begin{proposition}
Consider the case where the operator $\mathcal{T} : \mathcal{X} \to \mathcal{X}$ is a $q-$contraction (and hence $\mathcal{Z} = \mathcal{X}$). Then, for any $f \in \mathcal{Z}'$:
\begin{itemize}
    \item[(i)] The error in the M-layer Fredholm Neural Network approximation to the solution of the FIE, $f_M$, satisfies the following bound:
    \begin{eqnarray}\label{nn-error}
        \norm{f_M - f} \leq  \frac{q^M}{1-q}  \Big(\frac{D(b-a)^2}{2n} +  \norm{\mathcal{T}g - g} \Big), 
    \end{eqnarray}
where $D = \max \frac{d}{dz} \big(K(x,z)g(z)\big)$.
\item[(ii)] For a given error bound $\epsilon$, there exists a Fredholm NN with $M^*$ hidden layers, weights $\mathcal{W}$ and biases $\mathcal{B}$, $u(x; \mathcal{W}, \mathcal{B})=: f_{M^*}(x)$, such that $\norm{f_{M^*} - f} \leq \epsilon$, where:
    \begin{eqnarray}
        M^* \geq \frac{\ln \left(\epsilon(1-q) \right) - \ln \left(\Big(\frac{D(b-a)^2}{2n} +  \norm{\mathcal{T}g - g} \Big)\right)} {\ln q}.
    \end{eqnarray} 
\end{itemize}
\end{proposition}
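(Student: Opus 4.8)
The plan is to prove the two parts in sequence, with part (ii) following almost immediately from part (i) by taking logarithms and solving for $M$. The substantive work is all in establishing the error bound (i), which I would decompose into two pieces: the error incurred by truncating the fixed point iteration after $M$ steps (a purely functional-analytic estimate coming from the Banach fixed point theorem), and the error incurred by replacing the exact integral operator $\mathcal{I}$ by its Riemann-sum discretization on the $z$-grid (a quadrature estimate).

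\textbf{Step 1: Fixed point truncation error.} Since $\mathcal{T}$ is a $q$-contraction on $\mathcal{X}$, the iteration (\ref{iterations}) satisfies the standard a priori estimate $\norm{f_M - f^*} \leq \frac{q^M}{1-q}\norm{f_1 - f_0}$. With $f_0 = g$ we have $f_1 - f_0 = \mathcal{T}g - g$, so this contributes the term $\frac{q^M}{1-q}\norm{\mathcal{T}g - g}$. I would quote the contraction estimate from the Banach Fixed-Point Theorem referenced after Proposition \ref{km-method}; the only thing to check carefully is that the initialization is $f_0 = g$, which is exactly the convention fixed in Proposition \ref{km-method} and Definition \ref{dd}.

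\textbf{Step 2: Quadrature error.} Here I would bound $\norm{\mathcal{I}h - \mathcal{I}_N h}$, where $\mathcal{I}_N$ is the Riemann-sum discretization appearing in (\ref{disc}), using the elementary error estimate for the rectangle/Riemann rule: on an interval of length $b-a$ partitioned into $n$ subintervals, the quadrature error for integrating a $C^1$ function is bounded by $\frac{(b-a)^2}{2n}\sup|\text{derivative}|$. Applied to $z \mapsto K(x,z)g(z)$ — the integrand appearing at the first nontrivial iteration — and accounting for the domain factor $D$ in the $d$-dimensional case (this is where the constant $D = \max \frac{d}{dz}(K(x,z)g(z))$ and the factor $\tfrac{D(b-a)^2}{2n}$ come from), one gets the claimed quadrature term. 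Then I would combine the two errors: the discretized iteration $f_M$ differs from the exact $f^*$ by the sum of the truncation error and the accumulated quadrature error, and because the discretized map inherits (a version of) the contraction property, the propagation of the one-step quadrature error through $M$ iterations is again controlled by the geometric factor $\frac{q^M}{1-q}$ (or more precisely $\sum_{k} q^k \leq \frac{1}{1-q}$), yielding the bracketed sum in (\ref{nn-error}). Finally, by Lemma \ref{DNN_construction} this $f_M$ is exactly the output of the $M$-layer Fredholm NN, so the bound is a bound on the network's approximation error.

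\textbf{Step 3: Part (ii).} Given $\epsilon$, I set the right-hand side of (\ref{nn-error}) to be $\leq \epsilon$, i.e. $\frac{q^M}{1-q}\big(\tfrac{D(b-a)^2}{2n} + \norm{\mathcal{T}g - g}\big) \leq \epsilon$, and solve for $M$ by taking natural logarithms; since $\ln q < 0$ the inequality direction flips, giving exactly the stated formula for $M^*$. The weights and biases of this network are those prescribed by Lemma \ref{DNN_construction} with $\kappa = 1$ (the contractive case, as noted after Proposition \ref{km-method}).

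\textbf{Main obstacle.} The delicate point is Step 2 — specifically, making the combination of truncation and quadrature errors rigorous. One must verify that the discretized operator $\mathcal{T}_j$ of (\ref{disc-op}) is itself a contraction (or at least that iterating it does not amplify the per-step quadrature error beyond the geometric bound), and that it is legitimate to measure the quadrature error using only the integrand $K(x,z)g(z)$ rather than $K(x,z)f_k(z)$ at each iterate $k$; presumably this is absorbed by bounding everything in terms of $g$ and the contraction constant, but the constant $D$ as literally written refers only to the first integrand, so some care (or a mild implicit assumption that the relevant derivatives are uniformly controlled) is needed to make the telescoped estimate match the stated clean form. I would flag this and proceed with the natural uniform bound.
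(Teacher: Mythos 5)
Your proposal matches the paper's proof: the paper also splits the error by inserting $\mathcal{T}f_0$ --- writing $\norm{\mathcal{T}^{M} f_0 - \mathcal{T}f_0 + \mathcal{T}f_0 - f_0}$ inside the $\frac{q^M}{1-q}$ factor --- and bounds the first difference by the left-Riemann-rule quadrature estimate with integrand $K(x,z)g(z)$ and the second by $\norm{\mathcal{T}g - g}$, then obtains (ii) by taking logarithms exactly as you do. The propagation issue you flag in your final paragraph is genuine but is equally unaddressed in the paper, whose one-line justification absorbs the per-iteration quadrature error into the single geometric prefactor without verifying contractivity of the discretized operator, so your treatment is, if anything, more careful than the original.
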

    \begin{proof}
        This result for part $(i)$ follows from combining the known error bound arising in the method of successive approximations (see e.g., \cite{micula2023iterative}), for which we have:
        \begin{eqnarray} \label{error}
            \norm{f_M - f} \leq \frac{q^M}{1-q} \norm{\mathcal{T}f_0 - f_0}.
        \end{eqnarray}
        Hence, consider now $M$-layer approximation given by (\ref{discr_def}) and let $\mathcal{T}^M$ represent the operator given by the right-hand side. Then we have:
        \begin{eqnarray}
            \norm{f_M - f} \leq \frac{q^M}{1-q}  \norm{\mathcal{T}^{M} f_0 - \mathcal{T}f_0 + \mathcal{T}f_0 - f_0} \leq \frac{q^M}{1-q}\Big(\frac{D(b-a)^2}{2n} +  \norm{\mathcal{T}g - g} \Big).
        \end{eqnarray}
         Ther last step follows from the error arising from approximating the integral with the left Riemann and from the error in the successive approximation scheme. 
         \par The proof of $(ii)$ follows directly given Theorem \ref{FUA} and solving (\ref{error}) for the minimum number of hidden layers $M$.
    \end{proof}
\begin{figure}
    \centering
    \begin{neuralnetwork}[height=1.5, layertitleheight=3.6cm, nodespacing=1.5cm, layerspacing=2.cm]
        \newcommand{\z}[2]{$\kappa g(z_#2)$}
        \newcommand{\y}[2]{$f_3(z_#2)$}
        \newcommand{\w}[2]{$f_4(x_#2)$}
        \newcommand{\q}[2]{$x_#2$}
        \newcommand{\nodecleartext}[2]{$\mathcal{T}f(Z)$}
        \newcommand{\hfirst}[2]{\small $f_1(z_#2)$}
        \newcommand{\hsecond}[2]{\small $f_2(z_#2)$}
        \inputlayer[count = 3, bias = false, text = \q]
        \hiddenlayer[count=4, bias=false,  text=\z] \linklayers
        \hiddenlayer[count=4, bias=false, text=\hfirst] \linklayers
        \hiddenlayer[count=4, bias=false,  text=\hsecond] \linklayers
        \hiddenlayer[count=4, bias = false, text=\y] \linklayers
        \outputlayer[count = 1, bias = false, text = \nodecleartext]\linklayers
        \outputlayer[count = 3, text = \w]\linklayers
    \end{neuralnetwork}
    \caption{Full Fredholm NN model for estimating the solution of FIEs. The first component solves the IE along the pre-defined grid, followed by the last layer to obtain the final output.}
    \label{fig:ensemble-NN}
\end{figure}

Concluding, we remark that this approach also provides versatility with respect to the fixed point algorithm and the DNN model. For example, in the case of a $q-$contraction as above, the KM algorithm results in the error bound: 
\begin{eqnarray}
    \left\|f_M-f\right\| \leq \frac{\mathrm{e}^{1-q}}{1-q}\left\|T g-g\right\| \mathrm{e}^{-(1-q) v_M},
\end{eqnarray}
where $v_0=0 \text { and } v_M=\sum_{\nu=0}^{M-1} \kappa_\nu, M \geq 1$ (see \cite{micula2023iterative} and references therein). This provides a new estimate for the DNN error if constructed to replicate the KM fixed point approach.

\subsection{Extension to non-linear FIEs}
The framework for the linear FIEs can be used to generalize the approach to non-linear integral equations. We consider the case of non-linear Fredholm integral equations of the form: 
\begin{eqnarray}\label{nl-ie}
    f(x) = g(x) + \int_{\mathcal{D}}K(x,z) G(f(z))dz,
\end{eqnarray}
for some function $G: \mathbb{R} \rightarrow\mathbb{R}$. In order to solve such IEs using the proposed framework, we will construct an iterative process that transforms the non-linear equation into a linear IE, at each step. 

\begin{proposition}\label{nl-scheme}
Let $\mathcal{Z}$ be ${\cal X}$ or ${\cal H}$, depending on whether the nonlinear operator  ${\cal T}^{K,G,g} : {\cal Z} \to {\cal Z}$ defined by
$$(\mathcal{T}^{K,G,g}f)(x) := g(x) + \int_{\mathcal{D}}K(x,z) G(f(z))dz.$$
is contracting or non expansive.
Then, the iterative scheme $f_n(x) = \Tilde{f}_n(x)$, where $\tilde{f}_n(x)$ is the solution to the linear FIE: 
    \begin{eqnarray}\label{nl-iteration}
        \Tilde{f}_{n}(x) = (\mathcal{L}\Tilde{f}_{n-1})(x) + \int_{\mathcal{D}}K(x,z) \Tilde{f}_{n}(z))dz,
    \end{eqnarray}
    and $(\mathcal{L}\Tilde{f}_{n-1})(x) := g(x) + \int_{\mathcal{D}} K(x,y)\big( G(\Tilde{f}_{n-1}(y)) - \Tilde{f}_{n-1}(y)\big)dy,$ for $n \geq 1$, converges to the fixed point   $f^*$  which is a solution of the non-linear FIE (\ref{nl-ie}).
\end{proposition}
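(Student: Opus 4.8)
The plan is to recast the iterative scheme \eqref{nl-iteration} as the composition of two maps and invoke the Banach/Krasnoselskii--Mann machinery twice: once, at each fixed $n$, to guarantee that the \emph{linear} FIE \eqref{nl-iteration} has a unique solution $\tilde f_n$ given $\tilde f_{n-1}$, and once on the outer sequence $\{\tilde f_n\}$ to show convergence to $f^*$. First I would define the ``inner'' solution operator $\mathcal{S}$ that sends a datum $h \in \mathcal{Z}$ to the unique solution of the linear FIE $u = \big(g + \mathcal{I}(G(h) - h)\big) + \mathcal{I}u$; existence and uniqueness of $\mathcal{S}h$ follows from the hypothesis that the \emph{linear} integral operator $\mathcal{I}$ with kernel $K$ is contractive (resp.\ non-expansive, invoking Proposition \ref{km-method} in $\mathcal{H}$ for the $q=1$ case), so that $\mathcal{S}h = (I-\mathcal{I})^{-1}\big(g + \mathcal{I}(G(h)-h)\big) = \sum_{k\ge0}\mathcal{I}^{(k)}\big(g + \mathcal{I}(G(h)-h)\big)$. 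The outer iteration is then exactly $\tilde f_n = \mathcal{S}\tilde f_{n-1}$, i.e.\ a fixed-point iteration for $\mathcal{S}$.

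Second, I would verify the two things that make the scheme well-posed and convergent. (a) \emph{Consistency of the fixed point:} if $\tilde f_n \to f^*$ and each $\tilde f_n = \mathcal{S}\tilde f_{n-1}$, then passing to the limit (using continuity of $\mathcal{S}$, which follows from Lipschitz continuity of $G$ and boundedness of $\mathcal{I}$) gives $f^* = \mathcal{S}f^*$; but $u = \mathcal{S}f^*$ means $u = g + \mathcal{I}(G(f^*) - f^*) + \mathcal{I}u$, and substituting $u = f^*$ shows $f^* = g + \mathcal{I}(G(f^*) - f^*) + \mathcal{I}f^* = g + \mathcal{I}G(f^*)$, i.e.\ $f^*$ solves the nonlinear FIE \eqref{nl-ie}. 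Conversely any solution of \eqref{nl-ie} is a fixed point of $\mathcal{S}$, so the two solution sets coincide. (b) \emph{Convergence of the outer iteration:} estimate $\norm{\mathcal{S}h_1 - \mathcal{S}h_2}$. Writing $\mathcal{S}h = (I-\mathcal{I})^{-1}(g + \mathcal{I}(G(h)-h))$ and using $\norm{(I-\mathcal{I})^{-1}} \le (1-q_{\mathcal{I}})^{-1}$ together with $\norm{\mathcal{I}} \le q_{\mathcal{I}}$ and the Lipschitz constant $L_G$ of $G$, one gets $\norm{\mathcal{S}h_1 - \mathcal{S}h_2} \le \frac{q_{\mathcal{I}}(L_G+1)}{1-q_{\mathcal{I}}}\norm{h_1-h_2}$; alternatively, and more in the spirit of the hypothesis, one observes that $\mathcal{S}h_1 - \mathcal{S}h_2 = (I-\mathcal{I})^{-1}\big((\mathcal{T}^{K,G,g}h_1 - h_1) - (\mathcal{T}^{K,G,g}h_2 - h_2)\big) + (h_1 - h_2)$ is engineered so that $\mathcal{S}$ inherits the contraction (resp.\ non-expansiveness) of $\mathcal{T}^{K,G,g}$, and then Banach (resp.\ Krasnoselskii--Mann, via Proposition \ref{km-method} applied to the averaged iteration of $\mathcal{S}$) yields $\tilde f_n \to f^*$.

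The main obstacle, and the step deserving the most care, is pinning down the precise relationship between the contraction/non-expansiveness assumed for the \emph{nonlinear} operator $\mathcal{T}^{K,G,g}$ and the one needed for the \emph{linear} operator $\mathcal{I}$ in order to run the inner solve: the statement assumes a property of $\mathcal{T}^{K,G,g}$, but the linearization step \eqref{nl-iteration} implicitly requires that $(I-\mathcal{I})$ be boundedly invertible (contractive case) or that the Krasnoselskii--Mann iteration for the linear sub-problem converge in $\mathcal{H}$ (non-expansive case). I would therefore state explicitly the standing assumption that $\norm{\mathcal{I}} \le 1$ with the appropriate strict inequality in the contractive regime, and then show that under this assumption the composite $\mathcal{S}$ is well-defined and is a contraction (resp.\ non-expansive) whenever $\mathcal{T}^{K,G,g}$ is; the non-expansive case additionally needs the averaging parameters $\{\kappa_n\}$ of Proposition \ref{km-method} reintroduced on the outer loop, and I would remark that for the contractive case no averaging is needed and \eqref{nl-iteration} is used verbatim. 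The remaining estimates — bounding $\mathcal{I}$ by $\sup_x\int_{\mathcal D}\abs{K(x,z)}\ud z$, controlling the Neumann series, and passing limits through $G$ — are routine given the Lipschitz hypothesis on $G$ and compactness of $\mathcal D$.
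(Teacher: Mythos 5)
Your overall strategy is the same as the paper's: view \eqref{nl-iteration} as the fixed-point iteration $\tilde f_n = \mathcal{S}\tilde f_{n-1}$ of the ``inner solve'' operator $\mathcal{S}$, check that fixed points of $\mathcal{S}$ coincide with solutions of \eqref{nl-ie}, and invoke Banach (or Krasnoselskii--Mann). Your part (a) is correct and is exactly the paper's first sentence, made explicit. The difference is that you openly confront the step the paper elides: the paper's proof only verifies that the right-hand side maps $\mathcal{Z}$ into $\mathcal{Z}$ and then cites Banach's theorem, which of course requires contractivity, not merely invariance. You have correctly located the missing ingredient.

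However, your part (b) does not close that gap, and the gap is genuine. The bound you derive, $\norm{\mathcal{S}h_1-\mathcal{S}h_2}\le \frac{q_{\mathcal{I}}(L_G+1)}{1-q_{\mathcal{I}}}\norm{h_1-h_2}$, is not a contraction bound unless one additionally assumes something like $q_{\mathcal{I}}(L_G+2)<1$, which is strictly stronger than contractivity of $\mathcal{T}^{K,G,g}$ (the latter only needs $q_{\mathcal{I}}L_G<1$). Your algebraic identity for $\mathcal{S}h_1-\mathcal{S}h_2$ is correct, but the assertion that it shows $\mathcal{S}$ ``inherits'' the contraction of $\mathcal{T}^{K,G,g}$ is false: it only yields $\norm{\mathcal{S}h_1-\mathcal{S}h_2}\le\bigl(1+\frac{1+q}{1-q_{\mathcal{I}}}\bigr)\norm{h_1-h_2}$, since $(\mathcal{T}h_1-h_1)-(\mathcal{T}h_2-h_2)$ is controlled from above by $(1+q)\norm{h_1-h_2}$, not by something small. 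A one-dimensional analogue makes the failure concrete: take $\mathcal{I}$ to be multiplication by $a=0.9$ and $G(u)=-u$, so $\mathcal{T}h=g-0.9h$ is a $0.9$-contraction, yet $\mathcal{S}h=(1-a)^{-1}\bigl(g+a(G(h)-h)\bigr)=10g-18h$, whose iteration diverges even though it has the correct fixed point. So convergence of the scheme requires an extra smallness hypothesis that neither you nor the paper states; your instinct to ``state explicitly the standing assumption'' is the right one, but the assumption you propose ($\norm{\mathcal{I}}\le 1$, strict in the contractive case) is still not sufficient, and the inheritance claim should be replaced by an explicit condition such as $\frac{q_{\mathcal{I}}(L_G+1)}{1-q_{\mathcal{I}}}<1$.
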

\begin{proof}
It is straightforward to see that the fixed point of (\ref{nl-iteration}) is also a fixed point of (\ref{ie}) and that the right hand side of (\ref{nl-iteration}) maps functions $f \in \mathcal{Z}$ to $\mathcal{Z}$. Hence, by Banach's fixed point Theorem, the iteration converges to the fixed point $f^{*} \in \mathcal{Z}$.
\end{proof}

This process and the required transformation are summarized in Algorithm \ref{alg:nl-fnn} and given schematically in Fig. \ref{FNN-iteration}. With this result, we can use the FNN developed for the linear FIEs to solve equations in the form (\ref{nl-ie}). The only difference that occurs is due to the additive component in the iteration (\ref{nl-iteration}). It therefore remains to show that the proposed iteration indeed converges to the required fixed point and analyze the accompanying approximation error.

\begin{algorithm}[hbt!]
\caption{Iterative FNN for non-linear FIE.}\label{alg:nl-fnn}
\begin{algorithmic}
\State Set number of iterations $N'$.
\State Initialize $g_0(x)= g(x)$.
\While{$n \leq N'$}
    \State Step 1. Construct $ \Tilde{f}_{n}(x) = g_n(x) + \int_{\mathcal{D}}K(x,z) \Tilde{f}_{n}(z))dz$.
    \State Step 2. Solve the FIE using the Fredholm Neural Network, $u(x; \mathcal{W}, \mathcal{B}_n)$.  
    \State Step 3. Set $f_n(x) = u(x;\mathcal{W},\mathcal{B}_n)$ and $g_{n+1}(x) = g(x) + \int_{\mathcal{D}} K(x,z)\big( G(f_{n}(z)) - f_{n}(z)\big)dz$.
    \State Step 4. Set $n \gets n+1$. 
\EndWhile
\end{algorithmic}
\end{algorithm}

\begin{figure}[!ht]
\begin{center}
  \includegraphics[height=9cm, width=17cm,scale=1.0]{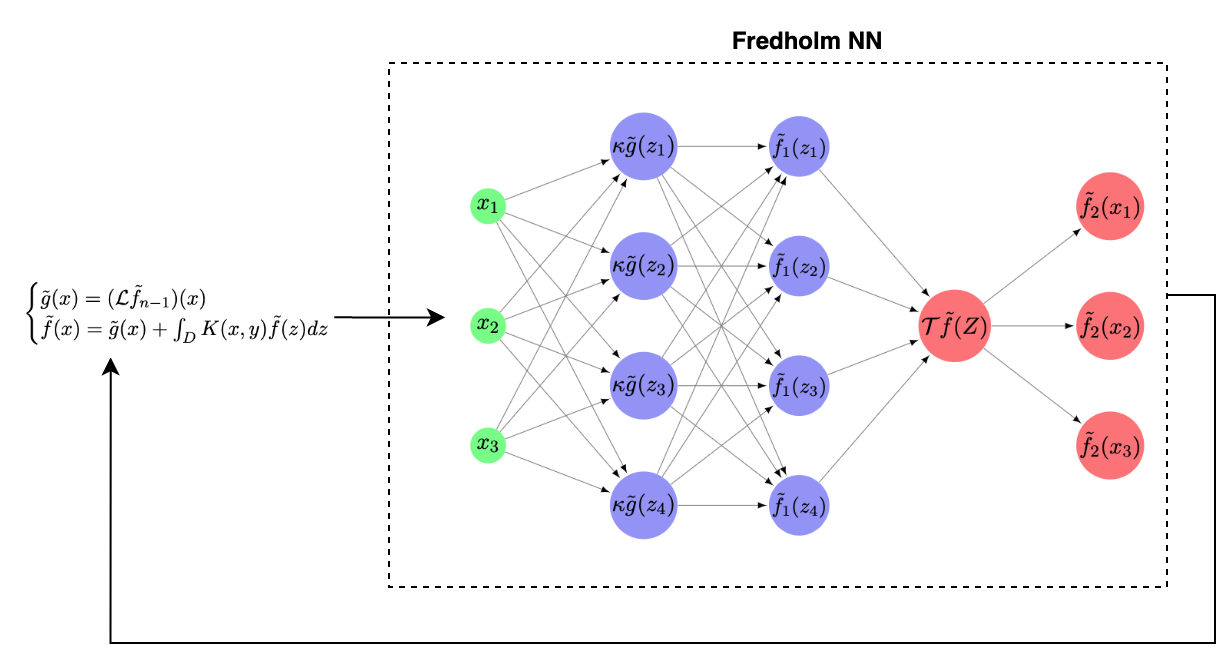} 
\end{center}
\caption{The recurrent Fredholm NNs as given in algorithm \ref{alg:nl-fnn}, for the solution of non-linear FIEs. We used the Fredholm NN to solve the FIE (\ref{nl-iteration}) at each iteration and used the result to re-define the additive term in the linear FIE.}\label{FNN-iteration}
\end{figure}

  \begin{proposition}
    Consider the non-linear Fredholm Integral Equation (\ref{nl-ie}). Then, Algorithm \ref{alg:nl-fnn} converges to the fixed point $f^* \in \mathcal{X}$. Furthermore, the approximation error satisfies (\ref{nn-error}), with $D: = \max \frac{d}{dz} \big(K(x,z)G(g(z))\big)$.
\end{proposition}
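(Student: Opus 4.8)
The plan is to combine three ingredients: the convergence of the linearised iteration from Proposition \ref{nl-scheme}, the classical successive-approximation error estimate (\ref{error}) applied to the nonlinear operator $\mathcal{T}^{K,G,g}$, and the left-Riemann quadrature error incurred when each linear FIE in Algorithm \ref{alg:nl-fnn} is replaced by its Fredholm NN realisation.

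First I would record that, under the standing hypotheses, $\mathcal{T}^{K,G,g}$ is a $q$-contraction on $\mathcal{X}$: if $G$ is Lipschitz with constant $L_G$ and the integral operator has norm $c = \sup_{x}\int_{\mathcal{D}}|K(x,z)|\,dz$, then $\norm{\mathcal{T}^{K,G,g}f_1 - \mathcal{T}^{K,G,g}f_2} \le cL_G\norm{f_1 - f_2}$, so we may take $q = cL_G < 1$. Proposition \ref{nl-scheme} then gives that the exact linearised iterates (\ref{nl-iteration}) converge to the unique fixed point $f^*$, and since $\mathcal{X}$ is complete and the iterates are continuous, $f^* \in \mathcal{X}$; in the idealised regime (exact integration, unbounded depth) the algorithm reproduces exactly this iteration, which settles the convergence claim. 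Moreover, (\ref{nl-iteration}) is precisely the successive-approximation scheme for $\mathcal{T}^{K,G,g}$ started at $f_0 = g$, so (\ref{error}) yields $\norm{\widehat{f}_M - f^*} \le \frac{q^M}{1-q}\norm{\mathcal{T}^{K,G,g}g - g}$, where $\widehat{f}_M$ denotes the $M$-th iterate computed with exact integration.

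Next I would bound the discretisation gap $\norm{f_M - \widehat{f}_M}$, where $f_M$ is the actual output of Algorithm \ref{alg:nl-fnn}. As in the proof of the preceding ($q$-contractive, linear) proposition, the grid enters only through replacing $\int_{\mathcal{D}}K(x,z)(\cdot)\,dz$ by its left-Riemann sum; writing $\mathcal{T}^M$ for the fully discretised one-step map, the relevant one-step error is that of the $C^1$ integrand $z \mapsto K(x,z)G(g(z))$, namely $\norm{\mathcal{T}^M g - \mathcal{T}^{K,G,g}g} \le \frac{D(b-a)^2}{2n}$ with $D := \max\frac{d}{dz}\big(K(x,z)G(g(z))\big)$. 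Feeding this through the contraction exactly as in that proof, i.e. estimating $\norm{f_M - f^*} \le \frac{q^M}{1-q}\norm{\mathcal{T}^M g - \mathcal{T}^{K,G,g}g + \mathcal{T}^{K,G,g}g - g}$ and splitting by the triangle inequality, yields (\ref{nn-error}) with the stated $D$.

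The main obstacle I anticipate is the honest bookkeeping of the quadrature error across both the outer linearisation loop and the inner Fredholm NN solves: a priori each intermediate iterate $f_n$ contributes its own Riemann error with integrand $K(x,z)G(f_n(z))$ rather than $K(x,z)G(g(z))$, and one must show these are uniformly controlled by $D$ — for instance via an a priori bound on $\norm{f_n}$ together with Lipschitzness of $G$, reducing matters to the first step — or that they telescope against the powers $q^k$ into the single factor $\frac{q^M}{1-q}$, mirroring the telescoping implicit in the linear proof. A secondary point is regularity: writing $D = \max\frac{d}{dz}(K(x,z)G(g(z)))$ presupposes $K(x,\cdot)$ and $G\circ g$ are $C^1$ (or at least of bounded variation); under weaker smoothness one replaces the $D$-type constants by moduli of continuity, leaving the structure of the bound intact.
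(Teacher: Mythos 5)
Your overall route coincides with the paper's: its proof is a two-line sketch that invokes Proposition \ref{nl-scheme} for convergence of the outer loop, Theorem \ref{FUA} for each inner linear solve, and asserts that the bound (\ref{nn-error}) carries over ``by construction'' with the modified constant $D$. You supply essentially the detail the paper omits, and your closing paragraph correctly isolates the two points the paper leaves open: uniform control of the quadrature error over the intermediate iterates $f_n$ (the integrand is $K(x,z)G(f_n(z))$, not $K(x,z)G(g(z))$), and the regularity needed for $D$ to be finite. Neither you nor the paper actually closes the first of these; you at least name it and propose a repair via an a priori bound on $\norm{f_n}$ and Lipschitzness of $G$, which is the right idea.

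There is, however, one concrete misstep in your second paragraph: the iteration (\ref{nl-iteration}) is \emph{not} the successive-approximation (Picard) scheme for $\mathcal{T}^{K,G,g}$ started at $g$. Each outer step of Algorithm \ref{alg:nl-fnn} solves a linear FIE to (approximate) convergence, so the one-step map is $h \mapsto (I-\mathcal{I})^{-1}(\mathcal{L}h)$ rather than $h \mapsto \mathcal{T}^{K,G,g}h$; the two maps share the fixed point $f^*$ but not the orbit, so (\ref{error}) with $q = cL_G$ does not apply verbatim to the outer iterates. To repair this you would need to verify that the composite map is itself a contraction --- in your notation one gets $\norm{(I-\mathcal{I})^{-1}\mathcal{L}h_1 - (I-\mathcal{I})^{-1}\mathcal{L}h_2} \leq \frac{c(L_G+1)}{1-c}\norm{h_1-h_2}$, which is a contraction only under the stronger condition $c(L_G+2)<1$ --- and then run the successive-approximation bound with that constant, additionally tracking the truncation error of the $M$-layer inner solves, since the algorithm never inverts $I-\mathcal{I}$ exactly (this is where the exponent $M$ in (\ref{nn-error}) should really enter, versus the outer count $N'$). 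The paper's own proof silently elides exactly this distinction (as does its proof of Proposition \ref{nl-scheme}, which appeals to Banach's theorem without verifying contractivity of the composite map), so your proposal is no less rigorous than the original; but the gap is genuine in both, and your identification of the linearised iteration with plain Picard iteration is the one place where your argument asserts something false rather than merely incomplete.
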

\begin{proof}
    The proof follows directly from applying Theorem \ref{FUA} to the linear FIEs (\ref{nl-iteration}) that arise from Algorithm \ref{alg:nl-fnn}. We apply an FNN to solve each iteration of the FIE that occur, thereby converging to the fixed point, as shown in Lemma \ref{nl-scheme}. 
    \par The error bound follows directly by construction of the iterative scheme (\ref{nl-iteration}).
\end{proof}  

\begin{remark}
   It is important to note that we can also utilize other ANNs architectures to solve non-linear integral equations. In particular, one can observe that a Recurrent Neural Network (RNN) creates the required structure to simulate the successive approximation calculations performed when approximating the IE (\ref{nl-ie}). The hidden layer in the RNN will have activation function $G(\cdot)$ and a single hidden state; this architecture is more specifically referred to as an Elman neural network (\cite{elman1990finding}).
\end{remark}

 Concluding, we also mention that the iterative method outlined in Algorithm \ref{alg:nl-fnn} warrants further analysis for other cases pertaining to non-contractive Integral Equations, and other generalizations of the Fredholm neural network framework. We defer this analysis for future work. 

\section{Implementation and examples}
In this section we consider various FIEs and illustrate the use of the Fredholm NNs approach for their solution. Comparisons with the exact solutions and numerical approximation errors are shown and discussed. For the examples in this section, we focus on the simpler case where we can apply (\ref{iterations}). These examples help illustrate the construction of the Fredholm NNs; then we proceed to a more complex structure using the KM algorithm in the next section. Finally, we show how the scheme can be used to solve inverse problems.
\subsection{Linear Fredholm Integral Equations}
\begin{example}\label{ex-1}
Consider the FIE given by: 
	\begin{eqnarray}
		u(x) = e^x +  \int_0^1 \frac{1}{e} u(y) dy.
	\end{eqnarray}
	With $g(x)=e^x$, it is straightforward to perform the iterative approximations to show that the fixed point solution is given by $u(x) = e^x +1$. To construct the Fredholm NNs, following Lemma \ref{DNN_construction}, for the first layer we have: 
	\begin{eqnarray}
		\begin{gathered}
				W_1=\left(\begin{array}{ccc}
				e^{y_1^{(1)}}, \dots, e^{y_{N_1}^{(1)}}
			\end{array}\right)^{\top}, 
			b_1=\left(\begin{array}{ccc}
			0, \dots, 0\end{array}\right).^{\top}
		\end{gathered}
	\end{eqnarray}
Since the kernel is constant, it is straightforward to see that the remaining weights and biases for the Fredholm NNs with $M$ layers are given by: 
\begin{eqnarray}
\begin{gathered}
	W_i=\left(\begin{array}{ccc}
		\frac{\Delta y}{e} & \ldots & \frac{\Delta y}{e} \\
		\vdots & \ldots &\vdots \\
		\frac{\Delta y}{e} & & \frac{\Delta y}{e}
	\end{array}\right),
b_i=\left(\begin{array}{ccc}
	e^{y_1}, \dots, e^{y_{N}} \end{array}\right)^{\top}, 
\end{gathered}
\end{eqnarray}
for $i = 2, \dots, M-1$, since the kernel is constant. 
We consider this architecture with constant $y-$grids of size $N=2000$, with $y_0 = 0$ and $y_N = 1$. 
\par The true solution for $x\in [0,1]$ along with the corresponding output from the proposed approach with $M = 10$ are shown in Fig. \ref{Example_2}. As seen, the two graphs are indistinguishable. In this simple example, we observe a constant error of approximately $ \norm{u_M - u^*} =8.03\cdot 10^{-4}$. To assess the effect of the DNN depth, we also plot the maximum error as a function of the hidden layers, showcasing the convergence of the method, and providing insight into the optimal number of layers required for convergence.
\end{example}

\begin{figure}[ht]
    \begin{center}
    \subfloat[]{\includegraphics[height=55mm, width=80mm,scale=1.0]{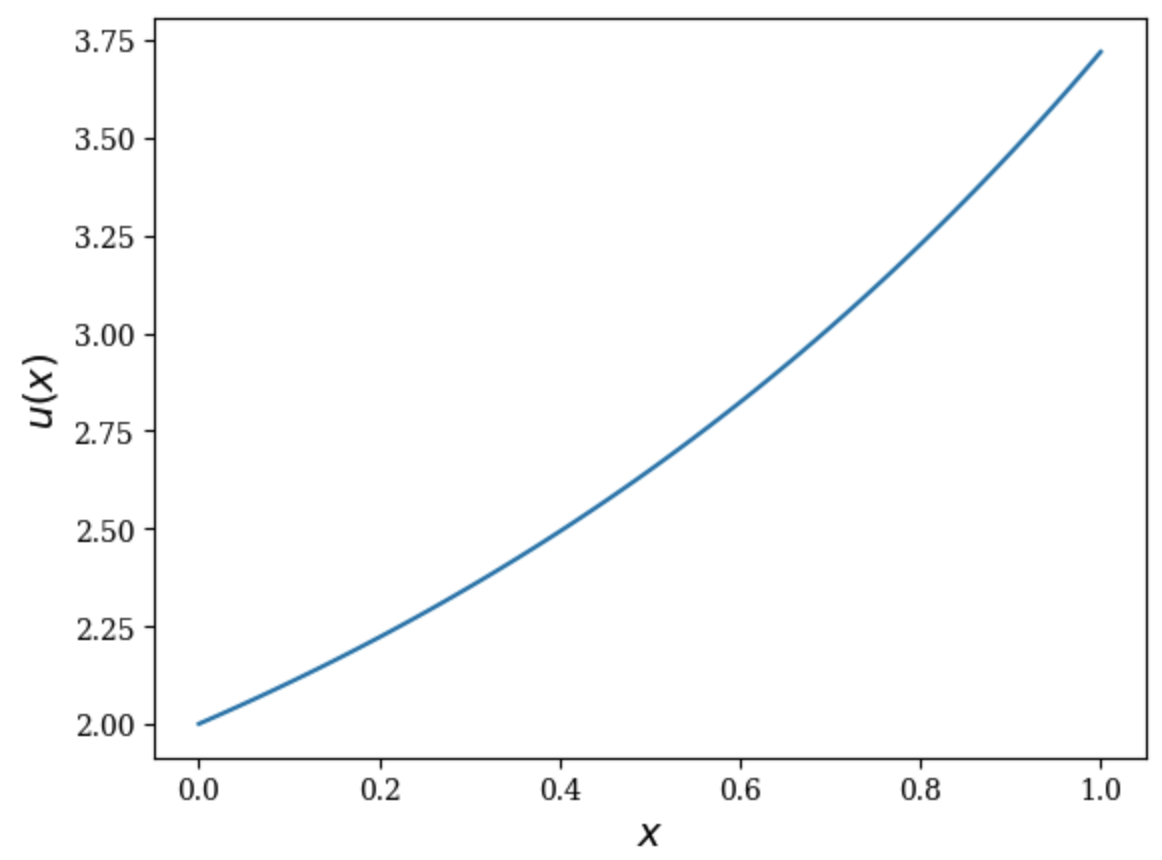}}
    \subfloat[]{\includegraphics[height=55mm, width=80mm,scale=1.0]{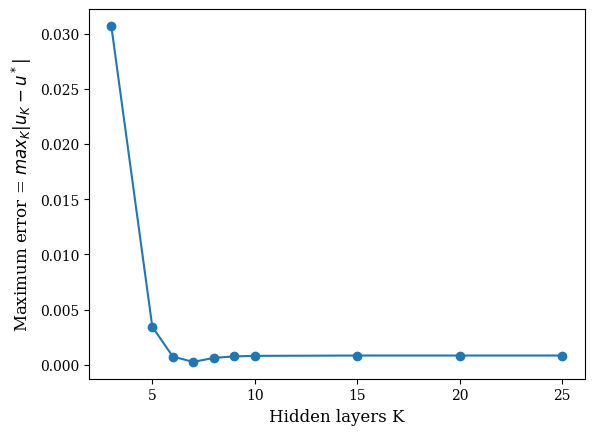}}
    \end{center}
	\caption{Results for the example \ref{ex-1}: (a) Exact solution and Fredholm NN approximation with 10 layers, (b) Maximum error as a function of the number of hidden layers.}\label{Example_2}
\end{figure}

\begin{example}\label{ex-2}
We now consider the equation: 
    \begin{eqnarray}\label{ex_2}
	f(x) = sin(x) + \int_0^{\pi/2} sin(x)cos(y)f(y) dy.
    \end{eqnarray}
One can check that the integral operator is non-expansive. For this example, we have:
    \begin{eqnarray}\nonumber
        f_n(x) = \big(2 - \frac{1}{2^n}\big)sin(x),
    \end{eqnarray}
    with $f_0(x) = sin(x)$, obtaining the fixed point solution $f(x) = \lim_{n \rightarrow \infty} f_n(x) = 2sin(x)$. 
    \par To construct the Fredholm NN, we consider a discretization of the $y$-grid with $y_0 = 0, y_N = \frac{\pi}{2}$ and $N = 2000$ points for every layer. For the first layer we have: 
	\begin{eqnarray}
		\begin{gathered}
				W_1=\left(\begin{array}{ccc}
				sin(y_1), \dots, sin(y_{N})
			\end{array}\right)^{\top}, 
			b_1=\left(\begin{array}{ccc}
			0, \dots, 0\end{array}\right).^{\top}
		\end{gathered}
	\end{eqnarray}
From the form of the kernel, the weights and biases of the next layers $W_i \in \mathbb{R}^{2000 \times 2000}$ and $b_i \in \mathbb{R}^{2000}$ for $i = 2, \dots, K$ are defined as:
\begin{eqnarray}
\begin{gathered}
	W_i=\left(\begin{array}{ccc}
		sin(y_1)cos(y_1) & \ldots & sin(y_{N})cos(y_1) \\
		\vdots & \ldots &\vdots \\
		sin(y_1)cos(y_{N}) & \ldots & sin(y_{N})cos(y_{N})
	\end{array}\right),
b_i=\left(\begin{array}{ccc}
	sin(y_1), \dots, sin(y_{N}) \end{array}\right)^{\top}.
\end{gathered}
\end{eqnarray}
We consider this architecture with a $y-$grid of size $N=2000$, with $y_0 = 0$ and $y_N = \pi/2$, for every layer and a total of $M=15$ layers. The true solution for $x\in [0, 2\pi]$ along with the resulting DNN and the corresponding errors are shown in Fig. \ref{Example_3}.
\end{example}

\begin{figure}[!ht]
    \begin{center}
    \subfloat[]{\includegraphics[height=55mm, width=80mm,scale=1.0]{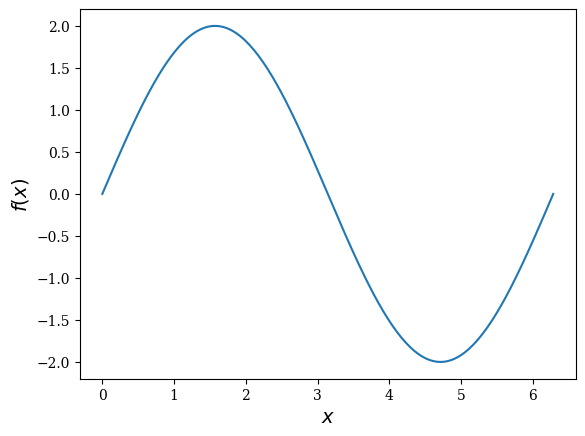}}
    \subfloat[]{\includegraphics[height=55mm, width=80mm,scale=1.0]{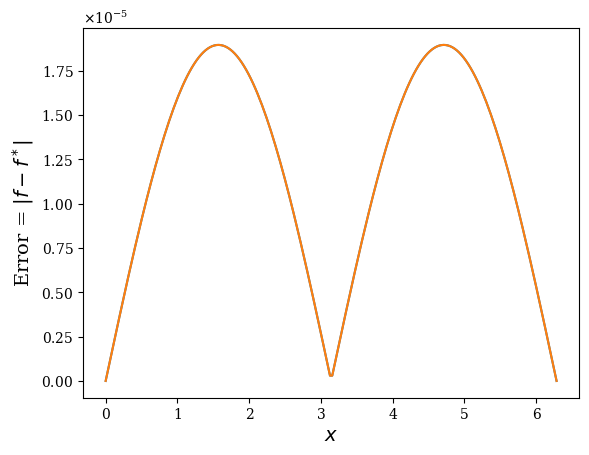}} \\
    \subfloat[]{\includegraphics[height=55mm, width=80mm,scale=1.0]{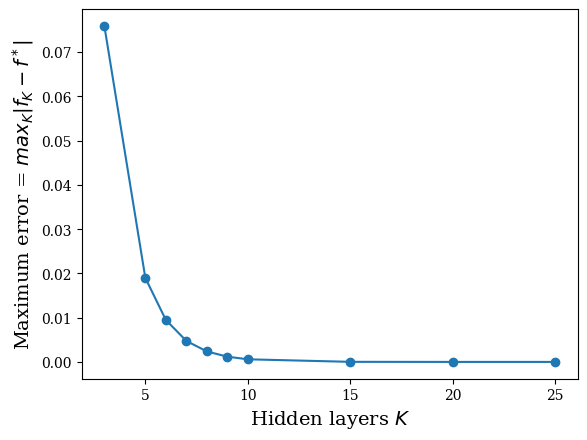}}
    \end{center}
	\caption{Results for the example \ref{ex-2}: (a) Exact solution and Fredholm NN approximation with 15 layers (b) Error in the Fredholm NN (c) Maximum error with respect to the number of hidden layers.}\label{Example_3}
\end{figure}

\subsection{Boundary Value problems}
Here, we will be focusing  on boundary value problems (BVP); for an in-depth analysis of the connection between such Boundary Value Problems and IEs we refer the reader to \cite{wazwaz2011linear}. The required connection is established via the following result:
\begin{lemma}\label{ode-ie}
    Consider a BVP of the form:
    \begin{eqnarray}
    y''(x) + g(x)y(x) = h(x), \,\,\,\, 0<x<1,
    \end{eqnarray}
    with $y(0) = \alpha, y(1) = \beta$.
Then we can solve the BVP by obtaining the following FIE:
\begin{eqnarray}
    u(x) = f(x) + \int_{0}^{1} K(x,t) u(t)dt,
\end{eqnarray}
where:
\begin{eqnarray} \notag
    u(x) = y''(x), \\
    f(x) = h(x) - \alpha g(x) - (\beta - \alpha) x g(x), \notag
\end{eqnarray} 
and the kernel is given by:
\begin{eqnarray}\label{ode-kernel}
K(x,t) = 
    \begin{cases}
        t(1-x)g(x), \,\,\, 0 \leq t \leq x \\
        x(1-t)g(x), \,\,\, x\leq t \leq 1.
    \end{cases}
\end{eqnarray}
Finally, by definition of $u(x)$, we can obtain the solution to the BVP by:
\begin{eqnarray}\label{transf}
    y(x) = \frac{h(x) - u(x)}{g(x)}.
\end{eqnarray}
\end{lemma}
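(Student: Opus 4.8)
The plan is to verify Lemma~\ref{ode-ie} by a direct substitution argument, working backwards from the proposed integral representation to recover the original BVP. The key observation is that the kernel $K(x,t)$ in \eqref{ode-kernel} is (up to the factor $g(x)$) the Green's function $G(x,t)$ for the operator $-\frac{d^2}{dx^2}$ on $[0,1]$ with homogeneous Dirichlet boundary conditions; that is, $G(x,t) = t(1-x)$ for $0\le t\le x$ and $x(1-t)$ for $x\le t\le 1$, which satisfies $-\partial_{xx} G(x,t) = \delta(x-t)$, $G(0,t)=G(1,t)=0$. I would first record this fact, together with the standard consequence that for any continuous $\phi$, the function $w(x) := \int_0^1 G(x,t)\phi(t)\,dt$ solves $-w''(x) = \phi(x)$ with $w(0)=w(1)=0$.

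First I would introduce the lift $\ell(x) := \alpha + (\beta-\alpha)x$, the unique affine function with $\ell(0)=\alpha$, $\ell(1)=\beta$, and set $v(x) := y(x) - \ell(x)$, so that $v$ has homogeneous boundary data. Substituting $y = v + \ell$ into $y'' + g y = h$ gives $v'' + g v = h - g\ell = h - \alpha g - (\beta-\alpha)x g =: f(x)$, using $\ell'' = 0$. Hence $-v'' = g v - f$. Next I would apply the Green's function representation: since $v(0)=v(1)=0$, we have $v(x) = \int_0^1 G(x,t)\big(g(t)v(t) - f(t)\big)\,dt$. This is an integral equation in $v$, but the lemma phrases things in terms of $u(x) := y''(x)$, so the remaining step is a change of unknown.

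The bridge is the pointwise identity $u(x) = y''(x) = h(x) - g(x)y(x)$, i.e. $y(x) = (h(x)-u(x))/g(x)$, which is exactly \eqref{transf}; this requires $g(x)\neq 0$ on $[0,1]$, an implicit standing assumption I would flag. From $y = v + \ell$ we get $v(x) = (h(x)-u(x))/g(x) - \ell(x)$, and also $g(t)v(t) = h(t) - u(t) - g(t)\ell(t) = -u(t) + \big(h(t)-\alpha g(t)-(\beta-\alpha)t\, g(t)\big) = f(t) - u(t)$. Therefore $g(t)v(t) - f(t) = -u(t)$, and the representation for $v$ becomes $v(x) = -\int_0^1 G(x,t)u(t)\,dt$. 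Finally, multiplying $v(x) = (h(x)-u(x))/g(x) - \ell(x)$ by $g(x)$ and rearranging yields $u(x) = h(x) - g(x)\ell(x) - g(x)v(x) = f(x) + g(x)\int_0^1 G(x,t)u(t)\,dt = f(x) + \int_0^1 K(x,t)u(t)\,dt$, since $K(x,t) = g(x)G(x,t)$ by \eqref{ode-kernel}. Conversely, reading the same chain of equalities in reverse shows that any continuous solution $u$ of the FIE produces, via \eqref{transf}, a $C^2$ function $y$ solving the BVP, so the two problems are equivalent.

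The main obstacle is not any single hard estimate but rather the bookkeeping: one must be careful that the Green's function identity $-\partial_{xx}\!\int_0^1 G(x,t)\phi(t)\,dt = \phi(x)$ is applied with the correct sign (the kernel here corresponds to $-y''$, not $+y''$), and that the algebraic elimination of $v$ in favour of $u$ is done consistently with the definition $f(x) = h(x)-\alpha g(x)-(\beta-\alpha)x g(x)$. A secondary point worth stating explicitly is the regularity/solvability caveat: the equivalence presupposes $g$ continuous and nonvanishing on $[0,1]$ so that \eqref{transf} is well-defined and $u \in \mathcal{C}([0,1])$ corresponds to $y\in \mathcal{C}^2([0,1])$; degenerate cases (e.g.\ $g$ with zeros, or the homogeneous problem having nontrivial solutions) would need separate treatment and I would note this rather than pursue it.
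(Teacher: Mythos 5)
Your argument is correct, and every algebraic step checks out: with $\ell(x)=\alpha+(\beta-\alpha)x$ and $v=y-\ell$ one indeed gets $g v-f=-u$, hence $v=-\int_0^1 G(x,t)u(t)\,dt$ and $u=f+g(x)\int_0^1 G(x,t)u(t)\,dt$, which is exactly the stated FIE since $K(x,t)=g(x)G(x,t)$. Note, however, that the paper offers no proof of this lemma at all --- it is quoted from the cited reference (Wazwaz), where the standard derivation is different in organization: one sets $u=y''$, integrates twice to get $y(x)=\alpha+y'(0)x+\int_0^x(x-t)u(t)\,dt$, determines $y'(0)$ from $y(1)=\beta$, substitutes back into the ODE, and recombines $x\int_0^1(1-t)u(t)\,dt-\int_0^x(x-t)u(t)\,dt$ into the triangular kernel. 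Your Green's-function route is mathematically equivalent (the kernel produced by the double integration \emph{is} the Dirichlet Green's function of $-d^2/dx^2$), but it buys two things the textbook computation obscures: the factorization $K(x,t)=g(x)G(x,t)$ is visible at a glance, and the converse direction (a continuous solution of the FIE yields a $C^2$ solution of the BVP via \eqref{transf}) falls out of the same identities read backwards. The double-integration route is more elementary and self-contained, requiring no prior knowledge of Green's functions. You are also right to flag the standing hypotheses that the paper leaves implicit --- $g$ continuous and nonvanishing on $[0,1]$ so that \eqref{transf} makes sense (indeed, in Example \ref{ode-2} the paper applies the lemma with $g(x)=x$, which vanishes at $x=0$, so the transformation back to $y$ must there be read as a limit or handled by the original integral representation of $y$), and nondegeneracy of the homogeneous problem for uniqueness.
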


\begin{example}\label{ode-1}
Consider the following BVP depending on a parameter $p$ given by:
    \begin{equation}
        y''(x) + \frac{3p}{(p+x^2)^2}y(x) = 0,
    \end{equation}
    for $ x \in [0, 1]$ with $y(0) = 0$ and $y(1) = 1/\sqrt{p + 1}$.
The analytical solution is known to be:
\begin{eqnarray}
    y(x) = \frac{x}{\sqrt{p + x^2}}.
\end{eqnarray}
We can obtain the FIE according to Lemma \ref{ode-ie}, with:
\begin{flalign}\label{weights-ode}
    g(x) = \frac{3p}{(p+t^2)^2}, \,\,\, h(x) = 0.
\end{flalign}
As in the examples above, we consider a discretization of the $t$-grid and the weights and biases become: 
\begin{eqnarray}\label{weights-ode-2}
\begin{gathered}
	W_i=\left(\begin{array}{ccc}
		K(t_1,t_1) & \ldots & K(t_{N},t_1)\\
		\vdots & \ldots &\vdots \\
		K(t_1,t_{N}) & \ldots & K(t_{N},t_{N})
	\end{array}\right),\,\,
b_i=\left(\begin{array}{ccc}
	f(t_1), \dots, f(t_{N}) \end{array}\right)^{\top}.
\end{gathered}
\end{eqnarray}
In this example we set $p = 3.2$ to ensure that the integral operator is a contraction. We construct the Fredholm NN with 10 hidden layers that estimates $u(x)$ across the discretized interval $[0,1]$ and subsequently transform the estimate to the solution of the BVP by applying (\ref{transf}). Fig. \ref{Example_4} displays the graph of both the analytical and the Fredholm NN solution, and the corresponding error. For completeness, we also show the maximum error as a function of the number of layers used for the Fredholm NN, illustrating that the error sufficiently converges for $M=10$.
\begin{figure}[!ht]
	\begin{center}\subfloat[]{\includegraphics[height=55mm, width=80mm,scale=1.0]{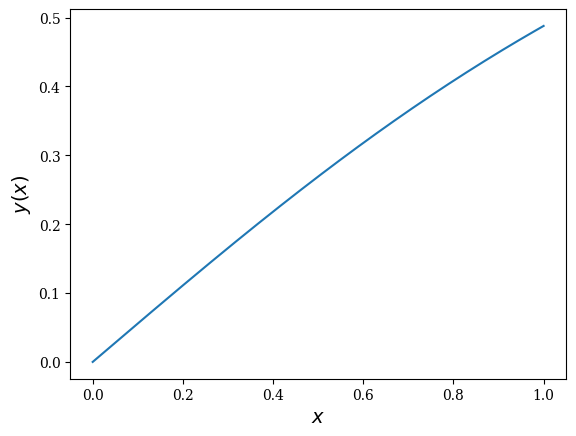}}
    \subfloat[]{\includegraphics[height=57mm, width=80mm,scale=1.0]{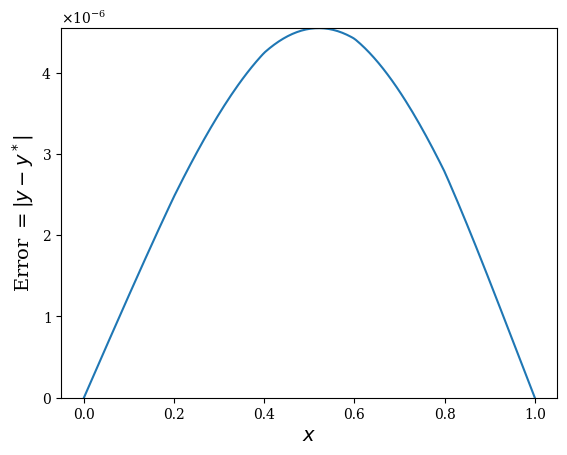}}	
	\end{center}
	\caption{Results for the example \ref{ode-1}: (a) Exact and 10-layer Fredholm NN approximation of the solution to the IE (b) Error of the Fredholm NNs approximation of $y(x)$.}\label{Example_4}
\end{figure}
\end{example}

\begin{example}\label{ode-2}
    In this final example of linear problems, we consider the BVP given by:
    \begin{equation}
        y''(x) + xy(x) = 0, 
    \end{equation}
for $ x \in [0, 1]$ and with boundary conditions $y(0) = 0, y(1) = 2$.
With $g(x) = x, f(x) = -2x^2$ and the kernel as defined above, it is straightforward to obtain the corresponding FIE, with the kernel given by (\ref{ode-kernel}).
The analytical solution of this ODE is written in terms of the Airy functions and is given by:

\begin{eqnarray}
    y(x) = \frac{2\big(\sqrt{3}Ai(\sqrt[3]{-1}x)- Bi(\sqrt[3]{-1}x)\big)}{Ai(\sqrt[3]{-1})- Bi(\sqrt[3]{-1})}.
\end{eqnarray}

Using the Fredholm NN and the final calculation (\ref{transf}) gives the results shown in Fig. \ref{Example_5}. 
\end{example}
\begin{figure}[ht]
 	\begin{center} \subfloat[]{\includegraphics[height=55mm, width=80mm,scale=1.0]{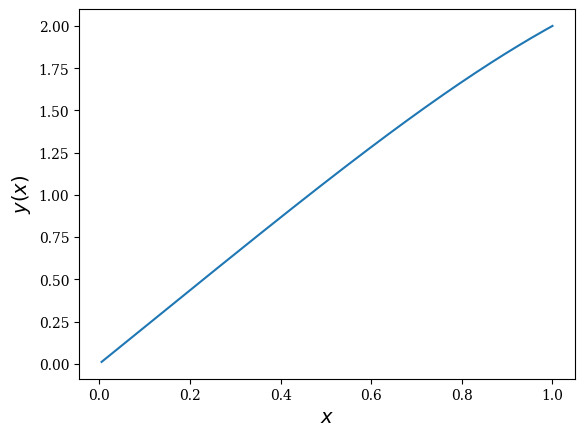}}
    \subfloat[]{\includegraphics[height=57mm, width=80mm,scale=1.0]{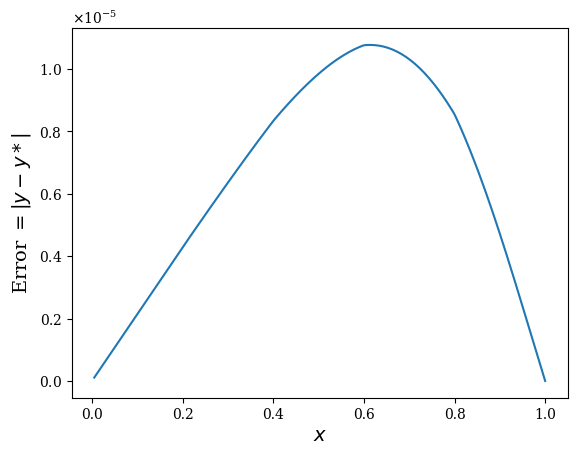}}	
	\end{center}
	\caption{Results for the example \ref{ode-2}: (a) Exact and 15-layer Fredholm NN approximation of the solution to the IE (b) Error of the Fredholm NN approximation of $u(x)$.}\label{Example_5}
\end{figure}

\subsection{Non-linear Fredholm Integral Equations}
Finally, we consider examples for non-linear problems, for which we will apply Algorithm \ref{alg:nl-fnn}, as described above. 
\begin{example}\label{nl-1}
Solution of the non-linear FIE: 
\begin{eqnarray}
    u(x) = \log(x) + \frac{143}{144} + \frac{1}{36} \int_0^1 t u^2(t)dt.
\end{eqnarray}
The analytical solution is $u(x) = \ln(x) +1$. We apply Algorithm \ref{alg:nl-fnn} with $N' = 5$ iterations, and where each linear IE is approximated with a Fredholm NN with $M = 7$ hidden layers using a grid with $N = 1000$ points. The resulting approximation is shown in Fig. \ref{ex-nl-1-fig}, and the corresponding error is $\norm{u_{N'} - u^*} \approx 8.34 \cdot 10^{-6}$.
\end{example}

\begin{example}\label{nl-2}
Solution of the non-linear FIE: 
\begin{eqnarray}
    u(x) = sin(x)+1-\frac{\pi}{12} - \frac{5\pi^2}{144} + \int_0^\pi t(u(t)+u^2(t))dt.
\end{eqnarray}
The analytical solution is $u(x) = \sin(x) + 1$. The iterative Algorithm is applied again with $N'=7, M=7$ and $N=2000$. The approximation is given in Fig. \ref{ex-nl-2-fig} with $\norm{u_{N'} - u^*} \approx 2.21 \cdot 10^{-3}$.
\end{example}

\begin{example}\label{nl-3}
Solution of the non-linear FIE: 
\begin{eqnarray}
    u(x) = 2-\frac{1}{3} (2\sqrt{2}-1)x-x^2 + \int_0^1 xy \sqrt(u(y))dy.
\end{eqnarray}
The analytical solution is $u(x) = 2-x^2$. We applied the Fredholm NN, with $N' = 10, M = 10$ over a grid with $N = 3000$ points. The approximation is shown in Fig. \ref{ex-nl-3-fig} and the error is $\norm{u_{N'} - u^*} \approx  4.4 \cdot 10^{-3}$.
\end{example}

\begin{figure}[!ht]
	\begin{center} \subfloat[]{\includegraphics[height=55mm, width=80mm,scale=1.0]{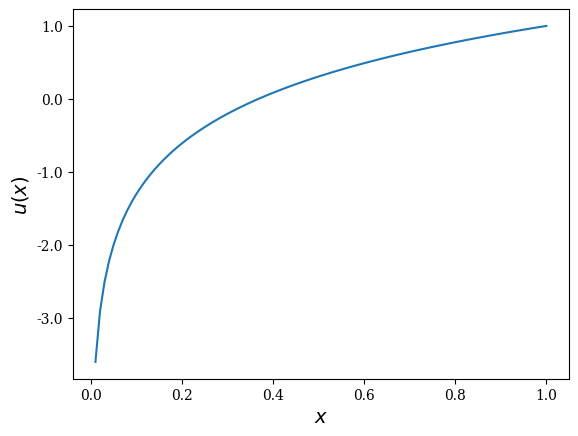} \label{ex-nl-1-fig}}
 \subfloat[]{\includegraphics[height=55mm, width=80mm,scale=1.0]{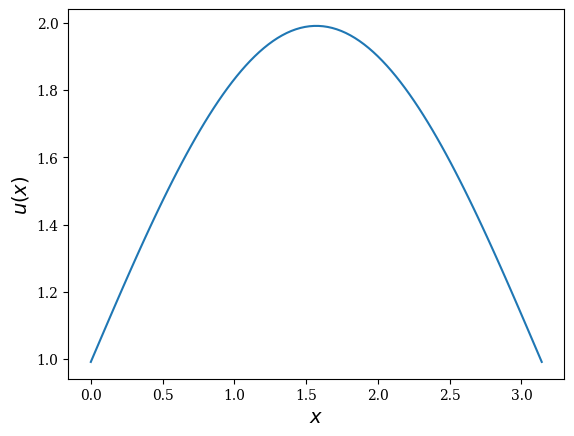}\label{ex-nl-2-fig}}\\
 \subfloat[]{\includegraphics[height=55mm, width=80mm,scale=1.0]{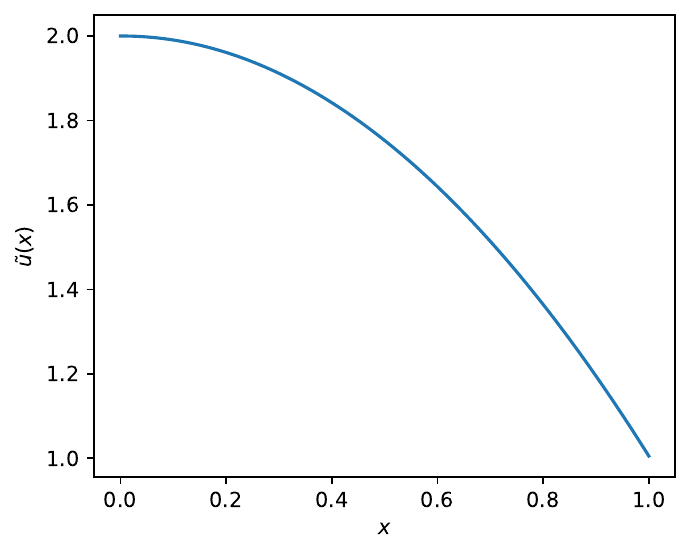} \label{ex-nl-3-fig}}
	\end{center}
	\caption{Non-linear FIEs examples: Fredholm NN result for (a) Example \ref{nl-1} (b) Example \ref{nl-2} (c) Example \ref{nl-3}.}\label{Non-linear-figs}
\end{figure}


\section{Application to elliptic PDEs: Boundary Integrals}
We now show how Fredholm NNs can be used to solve elliptic PDEs, by taking advantage of the Potential Theory, which allows us to consider integral representations of the solution to the PDEs. 
Before detailing our approach for such PDEs, we remind the reader of the main results pertaining to the Potential Theory below (see \cite{folland2020introduction} for further details and proofs).
\begin{theorem}[\cite{folland2020introduction}]\label{BIM}
    Consider the two-dimensional Laplacian equation for $u(x,y)$:
    \begin{eqnarray}\label{laplace}
        \begin{cases} u_{xx} + u_{yy} =0, & \text { for } {\bf x} = (x,y) \in \mathcal{D} \\ u(x,y)= f(x,y) & \text { for } {\bf x} \in \partial \mathcal{D}\end{cases}.
    \end{eqnarray}
The solution to the PDE can be written via the double layer boundary integral given by:
\begin{eqnarray}\label{potential-integral}
   u({\bf x}) \equiv  u(x,y) =  - \int_{\partial \mathcal{D}} \mu({\bf y}) \frac{\partial \Phi}{\partial \nu_{\bf y}}({\bf x} - {\bf y}) d \sigma_{\bf y}, \,\, {\bf x} \in \mathcal{D},
\end{eqnarray}
where $\Phi(x)$ is the fundamental solution of the Laplace equation, $\nu({\bf y})$ is the outward pointing normal vector to ${\bf y}$, and $\frac{\partial \Phi}{\partial \nu_{\bf y}} = \nu({\bf y}) \cdot \grad_{\bf y}{\Phi}$. In can be shown that the following limit holds, as we approach the boundary: 
\begin{eqnarray}\label{BIE-limit}
\lim _{\substack{{\bf x} \rightarrow {\bf x^{\star}} \\ {\bf x} \in \mathcal{D}}}   - \int_{\partial \mathcal{D}} \mu({\bf y}) \frac{\partial \Phi}{\partial \nu_{\bf y}}({\bf x} - {\bf y}) d \sigma_{\bf y} =u\left({\bf x}^{\star}\right) + \frac{1}{2} \mu\left({\bf x}^{\star}\right), \quad {\bf x}^{\star} \in \partial \mathcal{D}.
\end{eqnarray}
Hence, the function $\mu({\bf x}^{\star})$, defined on the boundary, must satisfy the Boundary Integral Equation (BIE):
\begin{eqnarray}\label{BIE}
    \mu({\bf x}^{\star}) = 2f({\bf x}^{\star}) + 2 \int_{\partial \mathcal{D}} \mu({\bf y}) \frac{\partial \Phi}{\partial \nu_{\bf y}}({\bf x}^{\star} - {\bf y}) d \sigma_{\bf y}, \,\,\ {\bf x}^{\star} \in \partial \mathcal{D}.
\end{eqnarray}
\end{theorem}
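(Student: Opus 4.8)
Since the statement collects standard facts from potential theory, the plan is to assemble the three ingredients --- harmonicity of the double layer potential in $\mathcal{D}$, its boundary jump relation \eqref{BIE-limit}, and the resulting boundary integral equation \eqref{BIE} --- from the classical machinery. First I would recall that the fundamental solution of the two-dimensional Laplacian is $\Phi({\bf x}) = -\tfrac{1}{2\pi}\log|{\bf x}|$, harmonic on $\mathbb{R}^2\setminus\{0\}$, so that the kernel $\frac{\partial\Phi}{\partial\nu_{\bf y}}({\bf x}-{\bf y})$ is smooth in ${\bf x}$ for ${\bf x}\notin\partial\mathcal{D}$. Differentiating \eqref{potential-integral} under the integral sign then shows that $u$ is harmonic in $\mathcal{D}$ (and in $\mathbb{R}^2\setminus\overline{\mathcal{D}}$); this requires only that $\partial\mathcal{D}$ be, say, $C^2$ (or Lyapunov) and that $\mu$ be continuous.

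The crux is the jump relation. The key computational lemma is the Gauss solid-angle identity: for a sufficiently regular closed curve,
\begin{eqnarray*}
-\int_{\partial\mathcal{D}}\frac{\partial\Phi}{\partial\nu_{\bf y}}({\bf x}-{\bf y})\,d\sigma_{\bf y}=
\begin{cases}
1, & {\bf x}\in\mathcal{D},\\
\tfrac12, & {\bf x}\in\partial\mathcal{D},\\
0, & {\bf x}\notin\overline{\mathcal{D}},
\end{cases}
\end{eqnarray*}
the boundary value holding where $\partial\mathcal{D}$ has a tangent. To obtain \eqref{BIE-limit} for ${\bf x}\to{\bf x}^\star$ along $\mathcal{D}$, I would write, for ${\bf x}\in\mathcal{D}$,
\begin{eqnarray*}
-\int_{\partial\mathcal{D}}\mu({\bf y})\frac{\partial\Phi}{\partial\nu_{\bf y}}({\bf x}-{\bf y})\,d\sigma_{\bf y}
= -\int_{\partial\mathcal{D}}\big(\mu({\bf y})-\mu({\bf x}^\star)\big)\frac{\partial\Phi}{\partial\nu_{\bf y}}({\bf x}-{\bf y})\,d\sigma_{\bf y}+\mu({\bf x}^\star),
\end{eqnarray*}
using the identity with value $1$. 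The first integral has a weakly singular kernel (the normal-derivative singularity is one order milder than that of $\Phi$ on a $C^2$ curve) and, when $\mu$ is H\"older continuous, depends continuously on ${\bf x}$ up to and across $\partial\mathcal{D}$. Letting ${\bf x}\to{\bf x}^\star$ and re-inserting the subtracted term via the identity with boundary value $\tfrac12$ yields $u({\bf x}^\star)+\tfrac12\mu({\bf x}^\star)$, where $u({\bf x}^\star)$ denotes the (now convergent) direct value of the double layer integral on $\partial\mathcal{D}$.

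Finally, imposing the boundary condition $u=f$ on $\partial\mathcal{D}$ and equating the interior limit in \eqref{BIE-limit} with $f({\bf x}^\star)$ gives $f({\bf x}^\star)=u({\bf x}^\star)+\tfrac12\mu({\bf x}^\star)$ with $u({\bf x}^\star)=-\int_{\partial\mathcal{D}}\mu({\bf y})\frac{\partial\Phi}{\partial\nu_{\bf y}}({\bf x}^\star-{\bf y})\,d\sigma_{\bf y}$; solving algebraically for $\mu$ produces \eqref{BIE}. I expect the jump relation to be the main obstacle: it is where the boundary geometry enters through the solid-angle computation, and it forces the regularity hypotheses (a Lyapunov or $C^{1,\alpha}$ boundary, a H\"older density) under which the weakly singular integral is defined and the splitting above is legitimate; all of this is carried out in detail in \cite{folland2020introduction}. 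A complementary point, not needed for the representation itself but for asserting that such a $\mu$ exists, is that the double layer operator appearing in \eqref{BIE} is compact on $C(\partial\mathcal{D})$ (or $L^2(\partial\mathcal{D})$) for smooth boundaries, so that solvability of the BIE follows from the Fredholm alternative together with uniqueness for the interior Dirichlet problem.
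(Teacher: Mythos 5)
Your proposal is correct and follows the classical potential-theoretic route (Gauss solid-angle identity, subtraction of $\mu({\bf x}^\star)$ to regularize the kernel, continuity of the resulting weakly singular integral across the boundary, then imposing $u=f$ and solving for $\mu$), which is exactly the argument the paper defers to \cite{folland2020introduction} without reproducing it. Your key decomposition is the same $I({\bf x})$ that the paper itself extracts from Folland in (\ref{limit-calc})--(\ref{i-term}) to build its numerical scheme, so there is nothing to reconcile; your added remark on solvability of (\ref{BIE}) via compactness and the Fredholm alternative is a correct and worthwhile supplement that the paper leaves implicit.
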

The above theorem connects a FIE, namely (\ref{BIE}), with the solution to the Laplace equation. Hence, we can use the proposed Fredholm NNs to calculate the boundary function and the subsequent solution to the PDE. However, numerically, ultimately obtaining the solution requires estimating the integral (\ref{potential-integral}), which may be difficult as we approach the boundary due to the singularity of the kernel when ${\bf x} = {\bf y} \in \mathcal{D}$. However, from the mathematical theory we know that the kernel is well-defined nonetheless at the boundary and so this integral is well-defined. Hence, the question is raised regarding how we can approach the boundary without creating large errors due to the kernel blowing-up, whilst also accounting for the required correction term $\frac{1}{2}\mu({\bf y}^{\star})$ appearing in (\ref{BIE-limit}) when the boundary is finally reached. \par 
To answer this, we turn to the theory underlying the calculations of (\ref{BIE-limit}); 
here we will be using the approach and notation from \cite{folland2020introduction}. In particular, letting $P({\bf x})$ represent the potential integral in (\ref{potential-integral}), it can be shown that, as ${\bf x} \rightarrow {\bf x}^{\star}$:
\begin{eqnarray}\label{limit-calc}
P({\bf x}) = I({\bf x})- I({\bf x}^{\star})+\frac{1}{2} \mu({\bf x}^{\star}) + P({\bf x}^{\star}), 
\end{eqnarray}
where the term $I({\bf x})$ arises in the analytical calculation of the limit as we approach the boundary and is given by:
\begin{eqnarray}\label{i-term}
    I({\bf x}) = - \int_{\partial \mathcal{D}}\big(\mu({\bf y}) - \mu({\bf x}^{\star})\big) \frac{\partial \Phi}{\partial \nu_{\bf y}}({\bf x} - {\bf y}) d \sigma_{\bf y}.
\end{eqnarray}
Therefore, (\ref{limit-calc}) can be rewritten as:
\begin{eqnarray}\label{limit-nn}
    P({\bf x}) =  - \int_{\partial \mathcal{D}}\big(\mu({\bf y}) - \mu({\bf x}^{\star})\big) \Big(\frac{\partial \Phi}{\partial \nu_{\bf y}}({\bf x} - {\bf y})- \frac{\partial \Phi}{\partial \nu_{\bf y}}({\bf x}^{\star} - {\bf y})  \Big)d \sigma_{\bf y} +\frac{1}{2} \mu({\bf x}^{\star}) + P({\bf x}^{\star}).
\end{eqnarray}
Intuitively, notice now that this formulation creates a smoothing effect, since as ${\bf x} \rightarrow {\bf x}^{\star}$, the term $I({\bf x}) - I({\bf x}^{\star})$ includes the kernel that blows up, but this is multiplied by the difference in the boundary function $\mu({\bf y}) - \mu({\bf x}^{\star})$, that approaches zero.
In this way, we are able to re-write the non-integrable potential as the sum of the integral of a smooth function and the potential $P({\bf x}^{\star})$ and a correction term on the boundary, which are all well-defined and straightforward to approximate numerically. \par 
Therefore, again connecting integral equation discretization with the fully-connected Fredholm NN, we present the following result which will be used to solve the Laplace equation.

\begin{proposition}\label{prop-laplace}
The Laplace PDE (\ref{laplace}) can be solved using a Fredholm NN with $M+1$ hidden layers, where the first $M$ layers solve the BIE (\ref{BIE}) on a discretized grid of the boundary, ${\bf y}_1, \dots, {\bf y}_N$. The final hidden and output layer are constructed in accordance to (\ref{limit-nn}), with weights $W_{M+1} \in \mathbb{R}^{N \times N}, W_O \in \mathbb{R}^N$ given by:
\begin{eqnarray}
    W_{M+1}= I_{N \times N},
    \,\,\,\,\
    W_{O}= \left(\begin{array}{cccc}
	\Delta \Phi({\bf x}, {\bf y}_1)\Delta \sigma_{\bf y}, & \Delta \Phi({\bf x}, {\bf y}_2)\Delta\sigma_{\bf y}, & \dots, & \Delta \Phi({\bf x}, {\bf y}_N) \Delta \sigma_{\bf y}
\end{array}\right)^{\top},
\end{eqnarray}
where, we define $\Delta \Phi({\bf x}, {\bf y}_i):= -\Big(\frac{\partial \Phi}{\partial \nu_{\bf y}}({\bf x} - {\bf y}_i)- \frac{\partial \Phi}{\partial \nu_{\bf y}}({\bf x}^{\star} - {\bf y}_i)\Big)$, for convenience. The corresponding biases $b_{M+1} \in \mathbb{R}^{N}$ and $b_O \in \mathbb{R}$ are given by:
\begin{eqnarray}
   b_{M+1} = \left(\begin{array}{ccc}
		- \mu({\bf x}^{\star}), \dots, - \mu({\bf x}^{\star})
	\end{array}\right)^{\top}, \,\,\,\  
 b_O= \frac{1}{2} \mu({\bf x}^{\star}) + P({\bf x}^{\star}).
\end{eqnarray}
\end{proposition}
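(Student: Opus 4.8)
The plan is to split the claimed network into three blocks and verify each against results already established. \textbf{Layers $1$ through $M$.} I would apply Lemma~\ref{DNN_construction} and Theorem~\ref{FUA} directly to the boundary integral equation (\ref{BIE}), read as a Fredholm equation of the second kind on $\partial\mathcal{D}$ with data $g\leftrightarrow 2f$ and kernel $K({\bf x}^{\star},{\bf y})\leftrightarrow 2\,\frac{\partial\Phi}{\partial\nu_{\bf y}}({\bf x}^{\star}-{\bf y})$. Under the standing hypothesis of Theorem~\ref{FUA} (the associated operator $\mathcal{T}^{K,2f}$ is contractive or non-expansive on the relevant space), these $M$ layers output the $N$-vector $\bigl(\mu_M({\bf y}_1),\dots,\mu_M({\bf y}_N)\bigr)$ approximating the density $\mu$ on the boundary grid, with error controlled by $M$ and the mesh size exactly as in the earlier bounds; in particular the scalar $\mu({\bf x}^{\star})$ is read from this vector at the node equal to (or nearest) ${\bf x}^{\star}$.

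\textbf{The added hidden layer $M+1$.} Here the check is a one-line computation of the affine map: with $W_{M+1}=I_{N\times N}$, linear activation, and $b_{M+1}=(-\mu({\bf x}^{\star}),\dots,-\mu({\bf x}^{\star}))^{\top}$, the layer outputs the vector with entries $\mu_M({\bf y}_i)-\mu({\bf x}^{\star})$, i.e.\ the discretization of the difference $\mu({\bf y})-\mu({\bf x}^{\star})$ in the regularized potential (\ref{limit-nn}). \textbf{The output layer.} Composing with $W_O^{\top}(\cdot)+b_O$, where $W_O$ has entries $\Delta\Phi({\bf x},{\bf y}_i)\Delta\sigma_{\bf y}$ and $b_O=\frac{1}{2}\mu({\bf x}^{\star})+P({\bf x}^{\star})$, produces
\[
\sum_{i=1}^{N}\bigl(\mu_M({\bf y}_i)-\mu({\bf x}^{\star})\bigr)\,\Delta\Phi({\bf x},{\bf y}_i)\,\Delta\sigma_{\bf y}\;+\;\frac{1}{2}\mu({\bf x}^{\star})\;+\;P({\bf x}^{\star}),
\]
which is precisely the left-Riemann quadrature of the right-hand side of (\ref{limit-nn}). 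Since, by Theorem~\ref{BIM} together with (\ref{limit-calc})--(\ref{limit-nn}), that right-hand side equals the double-layer potential $P({\bf x})=u({\bf x})$ of (\ref{potential-integral}) for every ${\bf x}\in\mathcal{D}$ and consistently up to $\partial\mathcal{D}$ (where it yields the jump term $\tfrac12\mu({\bf x}^{\star})$), choosing $N$ large (so the quadrature converges) and $M$ large (so $\mu_M\to\mu$) gives $\|u(\cdot;\mathcal{W},\mathcal{B})-u\|\le\epsilon$; the quantity $P({\bf x}^{\star})$ is the principal-value boundary potential, itself a quadrature over $\{{\bf y}_i\}$ of the already-computed $\mu_M$ (equivalently reducible to $f$ via (\ref{BIE-limit})).

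\textbf{Expected main obstacle.} The delicate step is uniformity near the boundary. For ${\bf x}$ close to ${\bf x}^{\star}$ the individual kernel samples $\frac{\partial\Phi}{\partial\nu_{\bf y}}({\bf x}-{\bf y}_i)$ are large when ${\bf y}_i$ is near ${\bf x}^{\star}$, so a naive quadrature-error estimate degrades as $\mathrm{dist}({\bf x},\partial\mathcal{D})\to0$; the entire purpose of the regularization (\ref{limit-nn}) is that the integrand $\bigl(\mu({\bf y})-\mu({\bf x}^{\star})\bigr)\bigl(\frac{\partial\Phi}{\partial\nu_{\bf y}}({\bf x}-{\bf y})-\frac{\partial\Phi}{\partial\nu_{\bf y}}({\bf x}^{\star}-{\bf y})\bigr)$ is continuous --- and, under mild smoothness of $\partial\mathcal{D}$ and $\mu$, H\"older-continuous --- uniformly up to the boundary, so the Riemann sum still converges. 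I would make this quantitative by combining a mean-value bound for the kernel difference with the modulus of continuity of $\mu$, and then add the error $\|\mu_M-\mu\|$ from the first block separately via Theorem~\ref{FUA}; the rest is bookkeeping on the two affine layers.
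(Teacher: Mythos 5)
Your proposal is correct and follows essentially the same route as the paper's proof: the first $M$ layers are the Fredholm NN of Lemma~\ref{DNN_construction} applied to the BIE (\ref{BIE}), and the discretized regularized potential (\ref{limit-nn}) is then read off as one affine hidden layer (identity weights, bias $-\mu({\bf x}^{\star})$) composed with the output node carrying $W_O$ and $b_O$. Your closing remarks on uniform quadrature error near the boundary go beyond what the paper proves (it simply asserts the final step is ``straightforward''), but they do not change the argument.
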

\begin{proof}
Consider a Fredholm NN as presented in Lemma \ref{DNN_construction} with output the solution of the BIE (\ref{BIE}), $\mu({\bf y}_i)$, for $i = 1, 2, \dots N$. Then, knowing that (\ref{limit-nn}) produces the solution to the PDE, we consider a discretization of the integral across the boundary grid:
\begin{eqnarray}
P({\bf x}) =  - \sum_{i=1}^N \big(\mu({\bf y}_i) - \mu({\bf x}^{\star})\big) \Big(\frac{\partial \Phi}{\partial \nu_{\bf y}}({\bf x} - {\bf y}_i)- \frac{\partial \Phi}{\partial \nu_{\bf y}}({\bf x}^{\star} - {\bf y}_i)  \Big) \Delta \sigma_{\bf y} +\frac{1}{2} \mu({\bf x}^{\star}) + P({\bf x}^{\star}),
    \end{eqnarray}
    and it is straightforward to show that, given the $N-$dimensional output $\mu({\bf y}_i), i = 1, \dots, N$ from the FNN, the expression above can be written as an additional hidden layer and output node with the given weights and biases. 
\end{proof}

We will now apply the result above to an example over the unit disc. As we will see the approach described in Proposition \ref{prop-laplace} is able to provide very accurate results. We also compare this to a standard Finite Elements scheme.

\begin{example}
 Consider the Laplace equation (\ref{laplace}) on the unit disc, $\mathcal{D} = \{x,y \in \mathbb{R}: x^2 + y^2 \leq 1 \}$ with boundary condition $f(x,y):=x^2 - y^2 + 1$ for ${\bf x} \in  \partial \mathcal{D}$. Then, 
 \begin{eqnarray}\label{potential-integral-example}
   u({\bf x}) \equiv  u(x,y) =   - \int_{\partial \mathcal{D}} \mu({\bf y}) \frac{\partial \Phi}{\partial \nu_{\bf y}}({\bf x} - {\bf y}) d \sigma_{\bf y} = \int_{\partial \mathcal{D}} \frac{{\bf x} \cdot ({\bf x} - {\bf y})}{| {\bf x} - {\bf y}|^2}\mu({\bf y}) d \sigma_{\bf y}, \,\,\, {\bf x} \in \mathcal{D},
\end{eqnarray}
 since it is known that the fundamental solution is given by $\Phi({\bf x} - {\bf y})= - \frac{1}{2\pi}\ln |{\bf x} - {\bf y}|$. With the given parameters of the problem, we can rewrite the above in polar coordinates:
\begin{eqnarray}\label{affine}
    u(r,\phi) = \int_{0}^{2 \pi} \frac{1}{2 \pi} \frac{cos \theta (cos \theta - r cos \phi) +sin \theta (sin \theta - r sin \phi)}{(cos \theta - r sin \phi)^2 + (sin \theta - rsin \phi)^2} \mu(\theta) d \theta,
\end{eqnarray}
where the boundary function $\mu(\theta)$ satisfies the FIE:
\begin{flalign} \label{BIE-laplace}  
\mu(\phi)
= 2f(\phi) + \int_0^{2 \pi} -\frac{1}{2 \pi} \mu(\theta) d \theta,
\end{flalign}
with $f(\phi) = 1+ 2cos(2\phi)$. By discretizing the BIE on the $\theta$-grid, $\{\theta_j\}_{j \in \mathcal{J}}$, we get:
\begin{flalign}
    \mu_{m+1}(\theta_i)  = 2 \kappa f(\theta_i) + \sum_{j \in \mathcal{J}} \mu_m(\theta_j)\Big( -\frac{\kappa \Delta\theta}{2 \pi} + (1-\kappa)\mathbbm{1}_{\{j = i\}} \Big).\label{km-int}
\end{flalign}
In this form, the first term in (\ref{km-int}) can be used as the bias term and the coefficient of $\mu_m(\theta_j)$ as the weight in the construction of the Fredholm NN in accordance to Theorem \ref{FUA}, i.e.,:
\begin{eqnarray}\label{weights-km}
\begin{gathered}
	W_i=\left(\begin{array}{cccc}
		-\frac{\kappa \Delta\theta}{2 \pi} + (1-\kappa) & -\frac{\kappa \Delta\theta}{2 \pi} & \dots & -\frac{\kappa \Delta\theta}{2 \pi} \\
  -\frac{\kappa \Delta\theta}{2 \pi}  & -\frac{\kappa \Delta\theta}{2 \pi} + (1-\kappa) & \dots & -\frac{\kappa \Delta\theta}{2 \pi} \\
		\vdots & \vdots &\vdots & \vdots \\
		-\frac{\kappa \Delta\theta}{2 \pi} & -\frac{\kappa \Delta\theta}{2 \pi} & \ldots & -\frac{\kappa \Delta\theta}{2 \pi} + (1-\kappa)
	\end{array}\right),\,\, 
 \vspace{0.4cm}
 \\
b_i=\left(\begin{array}{ccc}
	2 \kappa f(\theta_1), \dots, 2 \kappa f(\theta_{N}) \end{array}\right)^{\top}.
\end{gathered}
\end{eqnarray}
The Fredholm NN provides the solution to the boundary function along the pre-defined $\theta-$grid, as shown in the first component of Fig. \ref{fig:BIE-NN}. For this example, we have constructed the Fredholm Neural Network with a grid size of $N=2000$, and $M=15$ hidden layers. 
\par Having the approximation of the boundary function, we now need to perform the calculation given by (\ref{limit-nn}) to obtain the approximation of $u(r,\phi)$. Applying Proposition \ref{prop-laplace}, we construct the final components of the Neural Network (as displayed in Fig. \ref{fig:BIE-NN}). 
\begin{figure}[ht]
    \centering
    \includegraphics[height=8cm, width=16.5cm,scale=1.0]{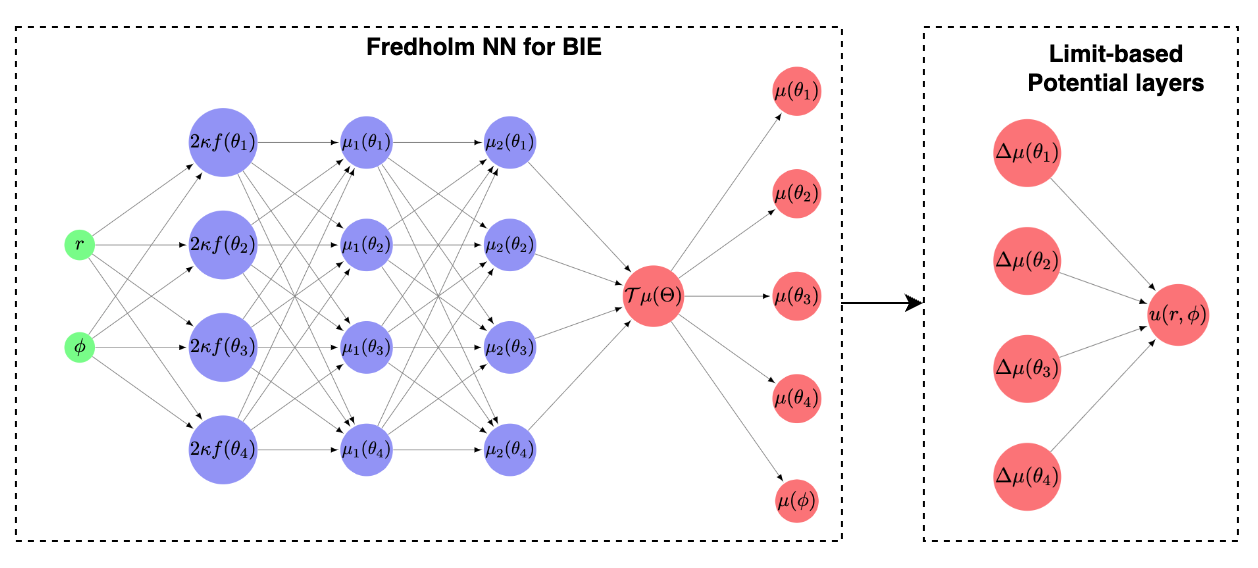}
    \caption{Schmematic of the Fredholm NN for approximating the solution of the Laplacian. The first component consists of the Fredholm NN solving the BIE. In the second component, where we have defined $\Delta \mu(\theta_i):= \big(\mu(\theta_i) - \mu(\phi)\big)$, the resulting boundary function is used to construct the final layers that simulate the limit calculation described in (\ref{limit-nn}).} \label{fig:BIE-NN}
\end{figure}
The results of the proposed approach are gathered in Fig. \ref{laplace_example}. In particular, from Fig. \ref{DNN-err} we can see that we are able to obtain extremely accurate approximations of the solution, with the maximum error across both the domain and boundary being $\norm{u_{NN} - u} = 2.27 \cdot 10^{-7}$. In particular, we see that within this framework we are able to achieve highly accurate results near and on the boundary, which is often an area where issues arise in Deep Learning. As seen, this is achieved by connecting the Fredholm NN with calculations related to Potential Theory and the construction of the Boundary Integral Equation for the PDE. 

\begin{figure}[htp]
    \begin{center}
    \subfloat[]{\includegraphics[width = 0.48\textwidth]{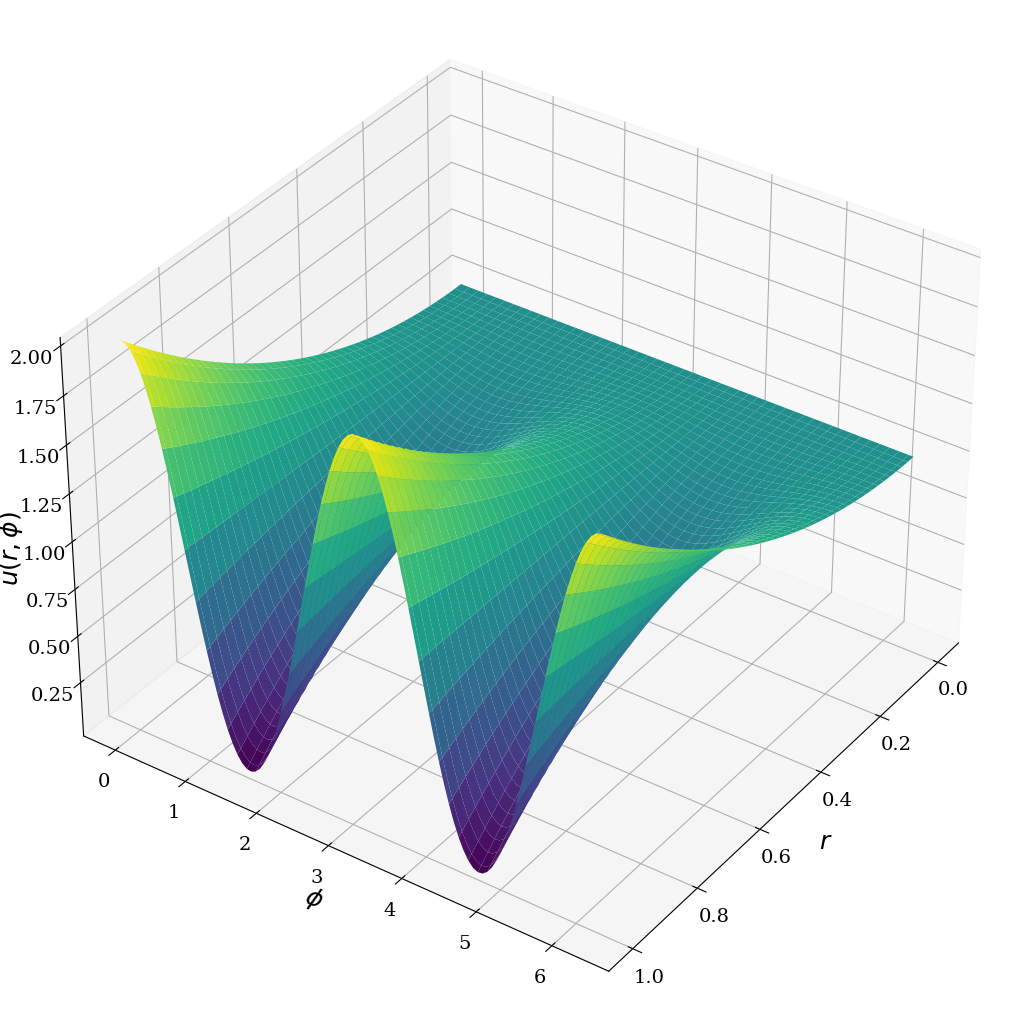}\label{DNN-sol}} 
    \subfloat[]{\includegraphics[width = 0.48\textwidth]{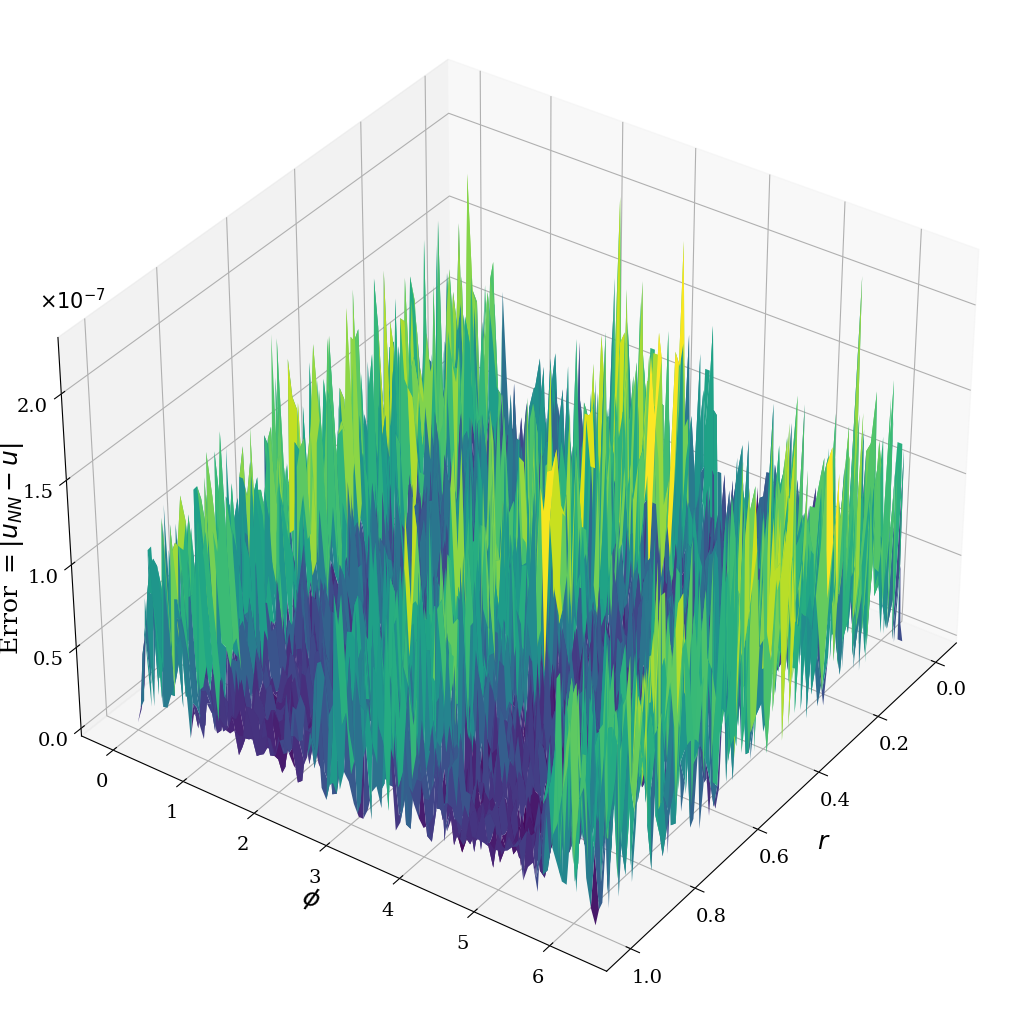}\label{DNN-err}} \end{center} 
	\caption{Results of the Fredholm NN method for the Laplacian: (a) Fredolm NN solution of the Laplacian (b) Error in the Fredholm NN approximation.}\label{laplace_example}
\end{figure}

\begin{figure}[htp]
    \centering
     \includegraphics[width = 0.50\textwidth]{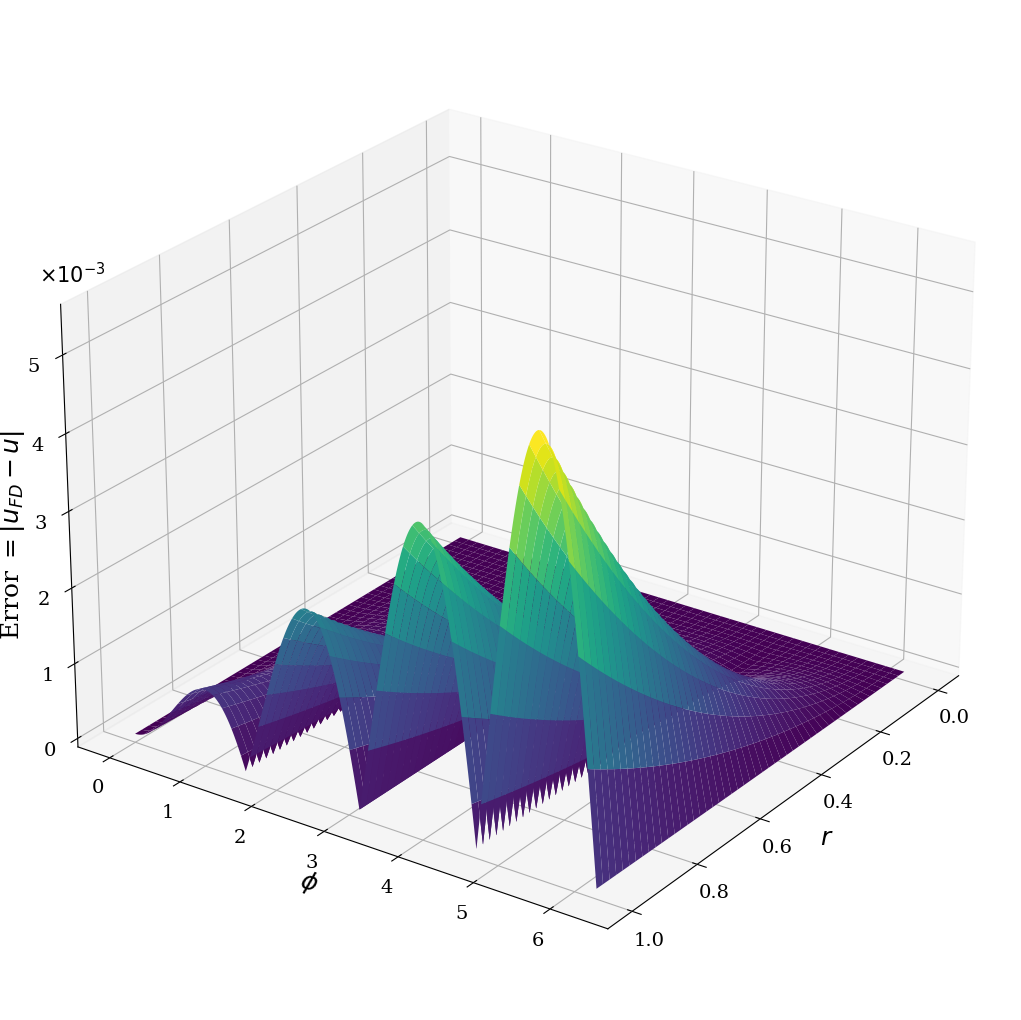}
    \caption{Error in the Finite Difference approximation.}
    \label{FD_err}
\end{figure}
\end{example} 

\par Finally, Fig. \ref{FD_err} shows the resulting absolute error using the Finite Difference approximation, where we have considered a $2000 \times 2000$ grid for the radial and angular variables and the appropriate periodic and radial boundary conditions. As shown, we reach an error of $\norm{u_{FD} - u} = 5.0 \cdot 10^{-3}$.  Hence, the proposed approach outperforms the FD scheme, whilst providing explainability for the underling DNN. Indeed, the entire construction of the DNN uses forms from the Fredholm and Potential Theory that allow us to construct the model in a way that produces accurate results both within the domain, and on the boundary (which has often been an issue with DNN based solutions of PDEs). 

\section{Fredholm Neural Networks in Inverse Problems} \label{inv-sec}
Here, we demonstrate how the proposed approach can be used to solve the inverse problem for to IEs. This setting consists of taking as data two functions $f, g : \mathcal{D} \to {\mathbb R}$ and looking to model the unknown kernel function $K : \mathcal{D} \times \mathcal{D} \to {\mathbb R}$, such that the function $f$ satisfies the integral equation (\ref{ie}) or (\ref{nl-ie-def}).

Note that 
solving the inverse problem for the unknown kernel $K$ is an ill-posed problem, not admitting a unique solution. Indeed, there are infinitely many kernels that produce the same integrated result. 
Hence, the solution of the inverse problem amounts to learning the integral operator, rather than the kernel per se.
\par Here, with this perspective, we assume that we are given data $\{\tilde{f}(x_i)\}_{k = 1, \dots, M}$ approximating the value of the solution $f$ of a FIE of the form \eqref{ie}, on some collocation points $\{x_i\}_{i=1, \dots N}, x_i \in \mathbb{R}$. These data correspond to the solution of a physical problem that can be in principle, modeled as an IE in the form of \eqref{ie}, with the kernel being unknown. For our illustrations, we assume that we know $g:\mathbb{R} \rightarrow \mathbb{R}$. Hence, the inverse problem consists of, given the data $\tilde{f}$, $g$, modelling the unknown kernel function $K$, such that it provides a consistent finite‐dimensional approximation of the integral operator, i.e., such that 
the integral equation whose solution $f$ coincides with $\tilde{f}$ on the chosen collocation points.
\par As possible choices for the kernels we will consider functions generated by a neural network a two-dimensional input and parameters $\theta$, hereinafter represented as $\hat{K}_{\theta}(x,y)$. (Note that the methodology that follows can naturally be used when considering any other type of models, as well.) Then, 
our strategy for solving the inverse problem takes advantage of the structure and convergence properties of the Fredholm NN as follows: select a set of parameters $\theta$ such that when constructing the estimated kernel $\hat{K}_{\theta}(\cdot, \cdot)$ and then feeding this into the Fredholm NN with $M$ hidden layers, denoted by $FNN_M( \cdot ; \hat{K}_{\theta})$, the output of the Fredholm NN, $\hat{f}(x;\hat{K}_{\theta})$, is as ''close'' as possible (in terms of an appropriately chosen loss function) to the given data $\tilde{f}$. The operator learning problem, then reduces to the problem of tuning the parameters $\theta$ of $\hat{K}_{\theta}$ appropriately, in terms of an optimization problem. 
 \par For our approach, we consider a simple Tikhonov regularization on the learned kernel $K_{\theta}$. 
 The complete loss function is given by: 
\begin{equation}\label{inverse-loss}
     L(\theta) = \frac{1}{N} \sum_{i=1}^{N} \Big(f(x_i) - \hat{f}(x_i; \hat{K}_\theta) \Big)^2 + \lambda_{reg}\mathcal{R}(\theta),
\end{equation}
where recall $\hat{f}(x_i;\hat{K}_{\theta})$ represents the output of the Fredhom NN and $\mathcal{R}(\theta) = \| K_{\theta}\|_{2}$. This approach is detailed in Algorithm \ref{alg:inverse} and shown schematically in Fig. \ref{fig:inverse}. 

\begin{algorithm}[hbt!]\caption{Inverse Problem using Fredholm NNs for the approximation of the unknown kernel function.}\label{alg:inverse}
\begin{algorithmic}
\State {Create grid $\{x_i\}_{i=1, \dots N}\}$ and set Fredholm Neural Network depth $M$}
\State {Set number of training epochs $N$, initialize iterations, $n=0$}
\State {Approximate the unknown kernel $K(x,y)$ using a neural network (NN), as $\hat{K}(x,y; \theta^{(n)})\equiv\hat{K}_{\theta^{(n)}}$, where $\theta^{(n)}$, denotes the set of unknown  parameters of the NN (weights and biases) at iteration $n$}
\While{$n \leq N$}
    \State {Step 1. Construct a Fredholm NN, $FNN_M(x;\hat{K}_{\theta^{(n)}})$}
    \State {Step 2. Use the Fredholm NN output, $\hat{f}(x_i; \hat{K}_{\theta^{(n)}}))$ to calculate the model loss given by \eqref{inverse-loss}.  }
    \State {Step 3. Train model and set $n=n+1$}
\EndWhile \\
\Return {$\hat{K}(x,y; \theta^{*})$}
\end{algorithmic}
\end{algorithm}
\begin{figure}[!ht]
\begin{center}
  \includegraphics[width = \textwidth]{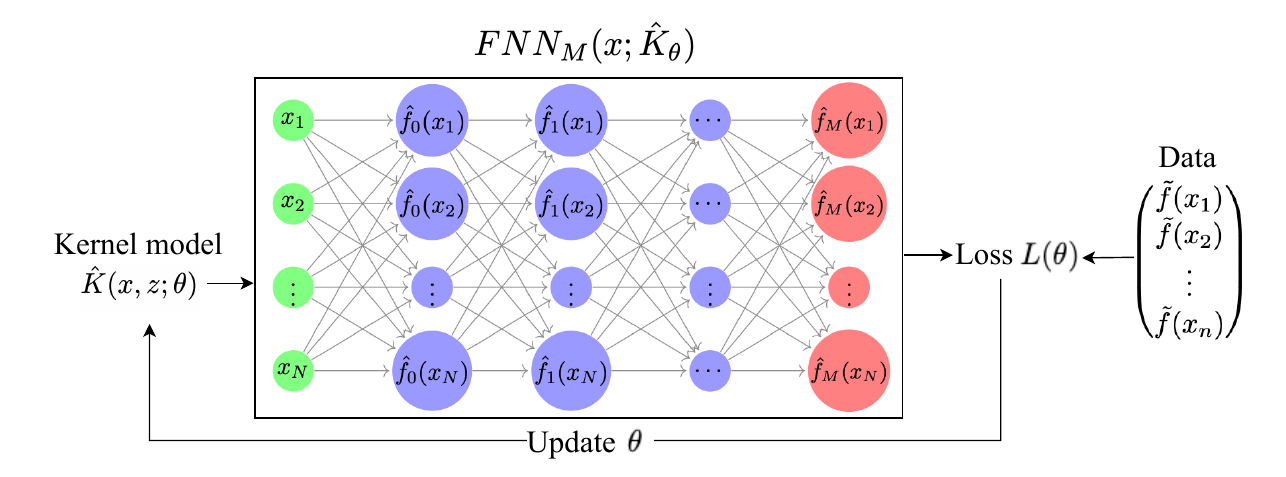} 
\end{center}
\caption{Schematic representation of Algorithm \ref{alg:inverse} to solve the inverse problem using the Fredholm NN framework.}\label{fig:inverse}
\end{figure}
\par Within this framework, the connection between the Fredholm NN and the fixed-point theory ensures that we select a kernel that converges to the fixed point given sufficient depth. The structure of the Fredholm NN itself ensures that we are simultaneously accounting for the data loss as well as ensuring that the model adheres to the physics of the problem. 

For our illustration of the methods in section \ref{inv-sec}, we consider the following example. 
\begin{example}\label{inverse-1-ex}
Consider the inverse  problem of finding the unknown  kernel function $K : [0,\pi/2] \times [0, \pi/2] \to {\mathbb R}$ such that the integral equation:
\begin{equation}\label{inverse-prob}
f(x) = \sin(x) +  \int_0 ^{1} K(x, y) f(y) dy
\end{equation}
admits the solution $f(x)=\frac{4}{3}\sin(x)$.

We consider a dataset consisting of 300 sample points generated from the forward Fredholm NN with $M=10$ layers, $\{\tilde{f}(x_i)\}_{i=1,\dots,300}$. As a model of the kernel, $\hat{K}_{\theta}(\cdot, \cdot)$, we used a shallow neural network with a single hidden layer, 20 neurons and a $\tanh$ activation function. 
\par We solve the inverse problem as described in section \ref{inv-sec}, using a Fredholm NN with $M=15$ hidden layers (a large enough depth to ensure convergence for any initial guess in the Fredholm NN), and $N = 300$ nodes per layer of the Fredholm NN (in accordance with the data points given for training) to which the learned kernel will be fitted.
For our experiments, 10 training instances of training Algorithm \ref{alg:inverse} were run, with a constant coefficient of $\lambda_{reg} = 1.0$E$-05$. We used the Levenberg–Marquardt (LM) algorithm for optimization, with a maximum of 200 iterations.
\par To assess the performance of the proposed approach, we used the learned kernel $\hat{K}_{\theta^*}$ as input to a Fredholm NN with the architecture used for training  and compared the output $\hat{f}(x,;\hat{K}_{\theta^*})$ to the original data $\tilde{f}(x)$. The results showed a high precision of the proposed method. In particular, the results of the best model over the 10 runs were as follows (median values are also given in parentheses): we achieved $MSE(\hat{f}, \tilde{f})\approx 7.32$E$-10$ $(3.66$E$-09)$ and $L_{\infty} := \max|\hat{f} - \tilde{f}| \approx 9.97$E$-05$ $(2.44$E$-04)$. In Fig. \ref{fig:inverse_example_1} we display the heat map of the learned kernel corresponding to this $MSE$ value when using the FIE regularization, along with the norm $\| \hat{f}_{i+1} - \hat{f}_i\|$ between the successive layers of the constructed $FNN$ showing the convergence to the fixed point solution.  
\begin{figure}[htp]
    \begin{center}
\includegraphics[width = 0.29 \textwidth]{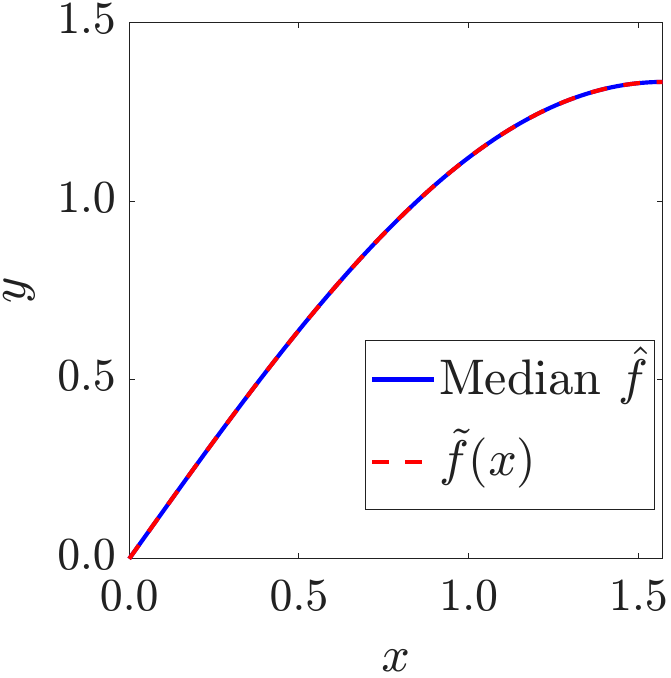}
\includegraphics[width = 0.30\textwidth]{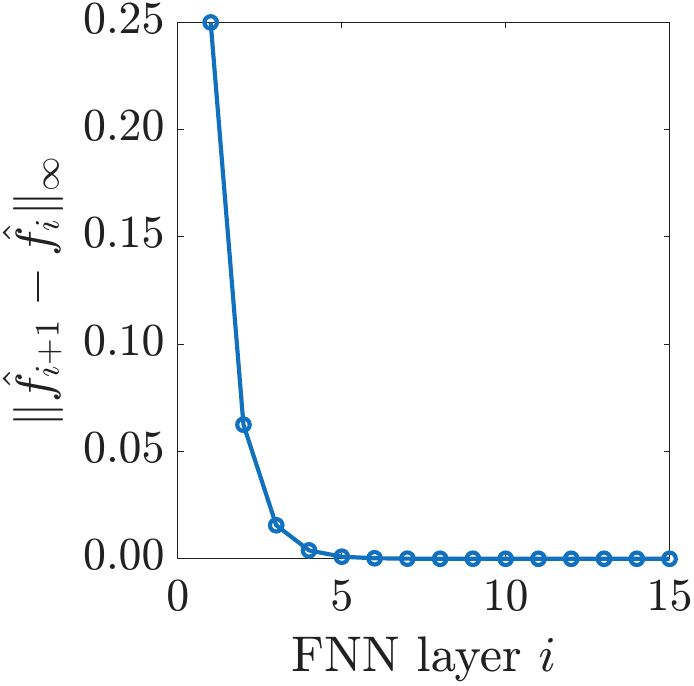}
\includegraphics[width = 0.33 \textwidth]{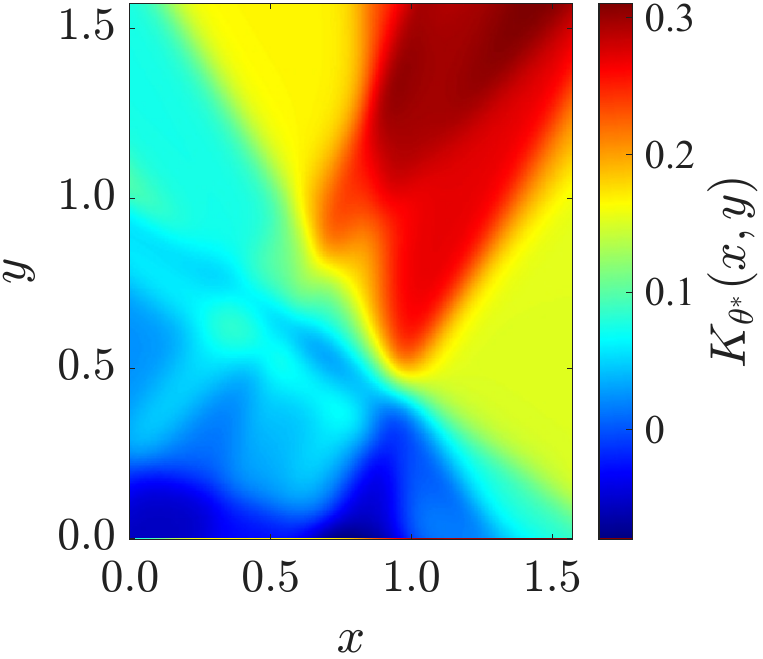}

    \end{center} 
	\caption{(Left) Median $\hat{f}(x; \hat{K}_{\theta^*})$ in comparison to the reference (training) data $\tilde{f}(x)$. (Center) The learned kernel function $\hat{K}_{\theta^*}(x,y)$. (Right) Norm between successive layers in $FNN(x; \hat{K}_{\theta^*})$, $\|\hat{f}_{i+1} - \hat{f}_i\|_{\infty}$, showing convergence to the estimated fixed-point. }\label{fig:inverse_example_1}
\end{figure}
\end{example}
\par As shown, the proposed methodology achieves high accuracy. We highlight that the Fredholm NN framework applied in this way ensures that no a-priori constraints on the initialization of the kernel model are needed, but rather the training process itself guarantees that we converge to values that satisfy the necessary properties, via passing through the Fredholm NN. In this way, we are able to learn the kernel function and obtain the contractiveness property via the implementation of the Fredholm NN.

\section{Conclusions and Discussion}
In this work, we have proposed and developed the Fredholm neural network for constructing explainable DNN models for the solution of problems that can be written as IEs. This scheme connects standard fixed point estimates to solve Integral Equations with Deep Neural Networks, providing a way to construct explainable DNN with weights, biases and number of layers arising from the theory of fixed point schemes and corresponding results related to their convergence. \par 
This approach provides an alternative to the interpretation and construction of DNNs, as training via an appropriate loss function is not required and the entire architecture can be set based on the underlying mathematical theory pertaining to the solution of IEs. We believe that this alternative viewpoint, that has started gaining attention (see for example \cite{doncevic2024recursively}) can be important in the field of scientific machine learning for numerical analysis, where the resulting models and their accompanying errors are often assessed on the basis of heuristic tests. Such heuristic DNNs lack stability, monotonicity and rigorous error bound/uncertainty quantification (UQ) analysis. With the proposed Fredholm NN, we tackle this issue, as the architecture of the model provides a-priori the error bound/UQ of the approximation. This is a step towards viewing DNNs from a numerical analysis point of view. For our illustrations, we used the proposed scheme to simple boundary value problems and elliptic PDEs. 
\par  We re-iterate that the main focus of this work is to introduce the Fredholm neural networks.  
In a following work, we will focus on the application and extension of the proposed scheme to deal with the solution of PDEs in high dimensions where lies the relative advantage of DNNs over schemes like FEMs and Boundary Elements. Moreover, a more general framework is required to consider expansive kernels, as well as IEs of different forms (e.g., Volterra integral equations, \cite{brunner2017volterra}). We defer the above for future works.\par 
In concluding, we believe that the proposed explainable DNN opens new research directions pertaining to general fields such as explainable AI (XAI, see e.g., \cite{angelov2021explainable}) and Uncertainty Quantification (see e.g., \cite{psaros2023uncertainty}), since fixed point neural networks provide a DNN architecture directly related to well-established numerical, analysis methods. One can think of this perspective as task-oriented DNNs (in analogy to \cite{fabiani2024task}) that facilitate specific numerical analysis computations such as the solution of PDEs, ODEs, solution of large scale linear and nonlinear equations \cite{doncevic2024recursively}.

\section*{Acknowledgements}
K.G. acknowledges support from the PNRR MUR Italy, project PE0000013-Future Artificial Intelligence Research-FAIR. 
C.S. acknowledges partial support from the PNRR MUR Italy, projects PE0000013-Future Artificial Intelligence Research-FAIR \& CN0000013 CN HPC - National Centre for HPC, Big Data and Quantum Computing. Also from the Istituto di Scienze e Tecnologie per l'Energia e la Mobilità Sostenibili (STEMS)-CNR. A.N.Y. acknowledges the use of resources from the Stochastic Modelling and 
Applications Laboratory, AUEB.  
\bibliographystyle{plain}
\bibliography{Fredholm_Neural_Networks}

\begin{thebibliography}{10}

\bibitem{alexandridis2002modelling}
AP~Alexandridis, CI~Siettos, HK~Sarimveis, AG~Boudouvis, and GV~Bafas.
\newblock Modelling of nonlinear process dynamics using kohonen's neural networks, fuzzy systems and chebyshev series.
\newblock {\em Computers \& Chemical Engineering}, 26(4-5):479--486, 2002.

\bibitem{angelov2021explainable}
Plamen~P Angelov, Eduardo~A Soares, Richard Jiang, Nicholas~I Arnold, and Peter~M Atkinson.
\newblock Explainable artificial intelligence: an analytical review.
\newblock {\em Wiley Interdisciplinary Reviews: Data Mining and Knowledge Discovery}, 11(5):e1424, 2021.

\bibitem{bauschke2017correction}
Heinz~H Bauschke, Patrick~L Combettes, Heinz~H Bauschke, and Patrick~L Combettes.
\newblock {\em Correction to: Convex analysis and monotone operator theory in hilbert spaces}.
\newblock Springer, 2017.

\bibitem{bolager2024sampling}
Erik~L Bolager, Iryna Burak, Chinmay Datar, Qing Sun, and Felix Dietrich.
\newblock Sampling weights of deep neural networks.
\newblock {\em Advances in Neural Information Processing Systems}, 36, 2024.

\bibitem{brunner2017volterra}
Hermann Brunner.
\newblock {\em Volterra integral equations: an introduction to theory and applications}, volume~30.
\newblock Cambridge University Press, 2017.

\bibitem{calabro2021extreme}
Francesco Calabr{\`o}, Gianluca Fabiani, and Constantinos Siettos.
\newblock Extreme learning machine collocation for the numerical solution of elliptic {PDEs} with sharp gradients.
\newblock {\em Computer Methods in Applied Mechanics and Engineering}, 387:114188, 2021.

\bibitem{chen2021solving}
Yifan Chen, Bamdad Hosseini, Houman Owhadi, and Andrew~M Stuart.
\newblock Solving and learning nonlinear pdes with gaussian processes.
\newblock {\em Journal of Computational Physics}, 447:110668, 2021.

\bibitem{de2022physics}
Mario De~Florio, Enrico Schiassi, and Roberto Furfaro.
\newblock Physics-informed neural networks and functional interpolation for stiff chemical kinetics.
\newblock {\em Chaos: An Interdisciplinary Journal of Nonlinear Science}, 32(6), 2022.

\bibitem{de2022error}
Tim De~Ryck and Siddhartha Mishra.
\newblock Error analysis for physics-informed neural networks (pinns) approximating kolmogorov pdes.
\newblock {\em Advances in Computational Mathematics}, 48(6):79, 2022.

\bibitem{doncevic2024recursively}
Danimir~T Doncevic, Alexander Mitsos, Yue Guo, Qianxiao Li, Felix Dietrich, Manuel Dahmen, and Ioannis~G Kevrekidis.
\newblock A recursively recurrent neural network (r2n2) architecture for learning iterative algorithms.
\newblock {\em SIAM Journal on Scientific Computing}, 46(2):A719--A743, 2024.

\bibitem{dong2021local}
Suchuan Dong and Zongwei Li.
\newblock Local extreme learning machines and domain decomposition for solving linear and nonlinear partial differential equations.
\newblock {\em Computer Methods in Applied Mechanics and Engineering}, 387:114129, 2021.

\bibitem{dong2021modified}
Suchuan Dong and Zongwei Li.
\newblock A modified batch intrinsic plasticity method for pre-training the random coefficients of extreme learning machines.
\newblock {\em Journal of Computational Physics}, 445:110585, 2021.

\bibitem{doumeche2023convergence}
Nathan Doum{\`e}che, G{\'e}rard Biau, and Claire Boyer.
\newblock Convergence and error analysis of pinns.
\newblock {\em arXiv preprint arXiv:2305.01240}, 2023.

\bibitem{effati2012neural}
Sohrab Effati and Reza Buzhabadi.
\newblock A neural network approach for solving fredholm integral equations of the second kind.
\newblock {\em Neural Computing and Applications}, 21:843--852, 2012.

\bibitem{elman1990finding}
Jeffrey~L Elman.
\newblock Finding structure in time.
\newblock {\em Cognitive science}, 14(2):179--211, 1990.

\bibitem{fabiani2021numerical}
Gianluca Fabiani, Francesco Calabr{\`o}, Lucia Russo, and Constantinos Siettos.
\newblock Numerical solution and bifurcation analysis of nonlinear partial differential equations with extreme learning machines.
\newblock {\em Journal of Scientific Computing}, 89:44, 2021.

\bibitem{fabiani2024task}
Gianluca Fabiani, Nikolaos Evangelou, Tianqi Cui, Juan~M Bello-Rivas, Cristina~P Martin-Linares, Constantinos Siettos, and Ioannis~G Kevrekidis.
\newblock Task-oriented machine learning surrogates for tipping points of agent-based models.
\newblock {\em Nature communications}, 15(1):4117, 2024.

\bibitem{fabiani2023parsimonious}
Gianluca Fabiani, Evangelos Galaris, Lucia Russo, and Constantinos Siettos.
\newblock Parsimonious physics-informed random projection neural networks for initial value problems of odes and index-1 daes.
\newblock {\em Chaos: An Interdisciplinary Journal of Nonlinear Science}, 33(4), 2023.

\bibitem{fabiani2024randonet}
Gianluca Fabiani, Ioannis~G Kevrekidis, Constantinos Siettos, and Athanasios~N Yannacopoulos.
\newblock Randonet: Shallow-networks with random projections for learning linear and nonlinear operators.
\newblock {\em arXiv preprint arXiv:2406.05470}, 2024.

\bibitem{folland2020introduction}
Gerald~B Folland.
\newblock {\em Introduction to partial differential equations}.
\newblock Princeton university press, 2020.

\bibitem{frey2022deep}
R{\"u}diger Frey and Verena K{\"o}ck.
\newblock Deep neural network algorithms for parabolic pides and applications in insurance and finance.
\newblock {\em Computation}, 10(11):201, 2022.

\bibitem{galaris2022numerical}
Evangelos Galaris, Gianluca Fabiani, Ioannis Gallos, Ioannis Kevrekidis, and Constantinos Siettos.
\newblock Numerical bifurcation analysis of pdes from lattice boltzmann model simulations: a parsimonious machine learning approach.
\newblock {\em Journal of Scientific Computing}, 92(2):1--30, 2022.

\bibitem{georgiou2024deep}
Kyriakos Georgiou and Athanasios~N Yannacopoulos.
\newblock Deep neural networks for probability of default modelling.
\newblock {\em Journal of Industrial and Management Optimization}, pages 0--0, 2024.

\bibitem{gonzalez1998identification}
Raul Gonz{\'a}lez-Garc{\'\i}a, Ramiro Rico-Mart{\`\i}nez, and Ioannis~G Kevrekidis.
\newblock Identification of distributed parameter systems: A neural net based approach.
\newblock {\em Computers \& chemical engineering}, 22:S965--S968, 1998.

\bibitem{guan2022solving}
Yu~Guan, Tingting Fang, Diankun Zhang, and Congming Jin.
\newblock Solving fredholm integral equations using deep learning.
\newblock {\em International Journal of Applied and Computational Mathematics}, 8(2):87, 2022.

\bibitem{han2018solving}
Jiequn Han, Arnulf Jentzen, and E~Weinan.
\newblock Solving high-dimensional partial differential equations using deep learning.
\newblock {\em Proceedings of the National Academy of Sciences}, 115(34):8505--8510, 2018.

\bibitem{hsiao2008boundary}
George~C Hsiao and Wolfgang~L Wendland.
\newblock {\em Boundary integral equations}.
\newblock Springer, 2008.

\bibitem{jafarian2013utilizing}
A~Jafarian and S~Measoomy Nia.
\newblock Utilizing feed-back neural network approach for solving linear fredholm integral equations system.
\newblock {\em Applied Mathematical Modelling}, 37(7):5027--5038, 2013.

\bibitem{ji2021stiff}
Weiqi Ji, Weilun Qiu, Zhiyu Shi, Shaowu Pan, and Sili Deng.
\newblock Stiff-pinn: Physics-informed neural network for stiff chemical kinetics.
\newblock {\em The Journal of Physical Chemistry A}, 125(36):8098--8106, 2021.

\bibitem{karniadakis2021physics}
George~Em Karniadakis, Ioannis~G Kevrekidis, Lu~Lu, Paris Perdikaris, Sifan Wang, and Liu Yang.
\newblock Physics-informed machine learning.
\newblock {\em Nature Reviews Physics}, 3(6):422--440, 2021.

\bibitem{kellogg2012foundations}
Oliver~Dimon Kellogg.
\newblock {\em Foundations of potential theory}, volume~31.
\newblock Springer Science \& Business Media, 2012.

\bibitem{kim2021stiff}
Suyong Kim, Weiqi Ji, Sili Deng, Yingbo Ma, and Christopher Rackauckas.
\newblock Stiff neural ordinary differential equations.
\newblock {\em Chaos: An Interdisciplinary Journal of Nonlinear Science}, 31(9), 2021.

\bibitem{kovachki2023neural}
Nikola~B Kovachki, Zongyi Li, Burigede Liu, Kamyar Azizzadenesheli, Kaushik Bhattacharya, Andrew~M Stuart, and Anima Anandkumar.
\newblock Neural operator: Learning maps between function spaces with applications to pdes.
\newblock {\em J. Mach. Learn. Res.}, 24(89):1--97, 2023.

\bibitem{kravvaritis2020variational}
Dimitrios~C Kravvaritis and Athanasios~N Yannacopoulos.
\newblock {\em Variational methods in nonlinear analysis: with applications in optimization and partial differential equations}.
\newblock Walter De Gruyter Gmbh \& Co Kg, 2020.

\bibitem{krishnapriyan2021characterizing}
Aditi Krishnapriyan, Amir Gholami, Shandian Zhe, Robert Kirby, and Michael~W Mahoney.
\newblock Characterizing possible failure modes in physics-informed neural networks.
\newblock {\em Advances in Neural Information Processing Systems}, 34:26548--26560, 2021.

\bibitem{lagaris1998artificial}
Isaac~E Lagaris, Aristidis Likas, and Dimitrios~I Fotiadis.
\newblock Artificial neural networks for solving ordinary and partial differential equations.
\newblock {\em IEEE transactions on neural networks}, 9(5):987--1000, 1998.

\bibitem{lee2020coarse}
Seungjoon Lee, Mahdi Kooshkbaghi, Konstantinos Spiliotis, Constantinos~I Siettos, and Ioannis~G. Kevrekidis.
\newblock Coarse-scale pdes from fine-scale observations via machine learning.
\newblock {\em Chaos: An Interdisciplinary Journal of Nonlinear Science}, 30(1):013141, 2020.

\bibitem{leshno1993multilayer}
Moshe Leshno, Vladimir~Ya Lin, Allan Pinkus, and Shimon Schocken.
\newblock Multilayer feedforward networks with a nonpolynomial activation function can approximate any function.
\newblock {\em Neural networks}, 6(6):861--867, 1993.

\bibitem{li2020Fourier}
Zongyi Li, Nikola Kovachki, Kamyar Azizzadenesheli, Burigede Liu, Kaushik Bhattacharya, Andrew Stuart, and Anima Anandkumar.
\newblock Fourier neural operator for parametric partial differential equations.
\newblock {\em arXiv preprint arXiv:2010.08895}, 2020.

\bibitem{li2020multipole}
Zongyi Li, Nikola Kovachki, Kamyar Azizzadenesheli, Burigede Liu, Andrew Stuart, Kaushik Bhattacharya, and Anima Anandkumar.
\newblock Multipole graph neural operator for parametric partial differential equations.
\newblock {\em Advances in Neural Information Processing Systems}, 33:6755--6766, 2020.

\bibitem{lin2021binet}
Guochang Lin, Pipi Hu, Fukai Chen, Xiang Chen, Junqing Chen, Jun Wang, and Zuoqiang Shi.
\newblock Binet: learning to solve partial differential equations with boundary integral networks.
\newblock {\em arXiv preprint arXiv:2110.00352}, 2021.

\bibitem{lu2021learning}
Lu~Lu, Pengzhan Jin, Guofei Pang, Zhongqiang Zhang, and George~Em Karniadakis.
\newblock Learning nonlinear operators via deeponet based on the universal approximation theorem of operators.
\newblock {\em Nature Machine Intelligence}, 3(3):218--229, 2021.

\bibitem{lu2021deepxde}
Lu~Lu, Xuhui Meng, Zhiping Mao, and George~Em Karniadakis.
\newblock {DeepXDE:} a deep learning library for solving differential equations.
\newblock {\em SIAM Review}, 63(1):208--228, 2021.

\bibitem{mall2016application}
Susmita Mall and Snehashish Chakraverty.
\newblock Application of legendre neural network for solving ordinary differential equations.
\newblock {\em Applied Soft Computing}, 43:347--356, 2016.

\bibitem{meade1994numerical}
Andrew~J Meade~Jr and Alvaro~A Fernandez.
\newblock The numerical solution of linear ordinary differential equations by feedforward neural networks.
\newblock {\em Mathematical and Computer Modelling}, 19(12):1--25, 1994.

\bibitem{micula2023iterative}
Sanda Micula and Gradimir~V Milovanovi{\'c}.
\newblock Iterative processes and integral equations of the second kind.
\newblock In {\em Matrix and Operator Equations and Applications}, pages 661--711. Springer, 2023.

\bibitem{mikhlin2014integral}
Solomon~Grigorʹevich Mikhlin.
\newblock {\em Integral equations: and their applications to certain problems in mechanics, mathematical physics and technology}.
\newblock Elsevier, 2014.

\bibitem{nelsen2021random}
Nicholas~H Nelsen and Andrew~M Stuart.
\newblock The random feature model for input-output maps between banach spaces.
\newblock {\em SIAM Journal on Scientific Computing}, 43(5):A3212--A3243, 2021.

\bibitem{pang2019neural}
Guofei Pang, Liu Yang, and George~Em Karniadakis.
\newblock Neural-net-induced gaussian process regression for function approximation and pde solution.
\newblock {\em Journal of Computational Physics}, 384:270--288, 2019.

\bibitem{psaros2023uncertainty}
Apostolos~F Psaros, Xuhui Meng, Zongren Zou, Ling Guo, and George~Em Karniadakis.
\newblock Uncertainty quantification in scientific machine learning: Methods, metrics, and comparisons.
\newblock {\em Journal of Computational Physics}, 477:111902, 2023.

\bibitem{qu2023boundary}
Wenzhen Qu, Yan Gu, Shengdong Zhao, et~al.
\newblock Boundary integrated neural networks (binns) for acoustic radiation and scattering.
\newblock {\em arXiv preprint arXiv:2307.10521}, 2023.

\bibitem{raissi2019physics}
Maziar Raissi, Paris Perdikaris, and George~E Karniadakis.
\newblock Physics-informed neural networks: A deep learning framework for solving forward and inverse problems involving nonlinear partial differential equations.
\newblock {\em Journal of Computational physics}, 378:686--707, 2019.

\bibitem{reich1979weak}
Simeon Reich.
\newblock Weak convergence theorems for nonexpansive mappings in banach spaces.
\newblock {\em J. Math. Anal. Appl}, 67(2):274--276, 1979.

\bibitem{rico1992discrete}
Ramiro Rico-Martinez, K~Krischer, IG~Kevrekidis, MC~Kube, and JL~Hudson.
\newblock Discrete-vs. continuous-time nonlinear signal processing of cu electrodissolution data.
\newblock {\em Chemical Engineering Communications}, 118(1):25--48, 1992.

\bibitem{siettos2002semiglobal}
Constantinos~I Siettos and George~V Bafas.
\newblock Semiglobal stabilization of nonlinear systems using fuzzy control and singular perturbation methods.
\newblock {\em Fuzzy Sets and Systems}, 129(3):275--294, 2002.

\bibitem{siettos2002truncated}
Constantinos~I Siettos, George~V Bafas, and Andreas~G Boudouvis.
\newblock Truncated chebyshev series approximation of fuzzy systems for control and nonlinear system identification.
\newblock {\em Fuzzy sets and systems}, 126(1):89--104, 2002.

\bibitem{solodskikh2023integral}
Kirill Solodskikh, Azim Kurbanov, Ruslan Aydarkhanov, Irina Zhelavskaya, Yury Parfenov, Dehua Song, and Stamatios Lefkimmiatis.
\newblock Integral neural networks.
\newblock In {\em Proceedings of the IEEE/CVF Conference on Computer Vision and Pattern Recognition}, pages 16113--16122, 2023.

\bibitem{sonoda2017neural}
Sho Sonoda and Noboru Murata.
\newblock Neural network with unbounded activation functions is universal approximator.
\newblock {\em Applied and Computational Harmonic Analysis}, 43(2):233--268, 2017.

\bibitem{wang2022and}
Sifan Wang, Xinling Yu, and Paris Perdikaris.
\newblock When and why pinns fail to train: A neural tangent kernel perspective.
\newblock {\em Journal of Computational Physics}, 449:110768, 2022.

\bibitem{wazwaz2011linear}
Abdul-Majid Wazwaz.
\newblock {\em Linear and nonlinear integral equations}, volume 639.
\newblock Springer, 2011.

\bibitem{yu2020hyper}
Tong Yu and Hong Zhu.
\newblock Hyper-parameter optimization: A review of algorithms and applications.
\newblock {\em arXiv preprint arXiv:2003.05689}, 2020.

\bibitem{yuan2022pinn}
Lei Yuan, Yi-Qing Ni, Xiang-Yun Deng, and Shuo Hao.
\newblock A-pinn: Auxiliary physics informed neural networks for forward and inverse problems of nonlinear integro-differential equations.
\newblock {\em Journal of Computational Physics}, 462:111260, 2022.

\bibitem{zhang2021generalized}
You-Cai Zhang, Ke~Guo, and Tao Wang.
\newblock Generalized krasnoselskii--mann-type iteration for nonexpansive mappings in banach spaces.
\newblock {\em Journal of the Operations Research Society of China}, 9:195--206, 2021.

\end{thebibliography}

\end{document}